\theoremstyle{plain}
\newtheorem{theorem}{Theorem}[section]
\newtheorem{lemma}[theorem]{Lemma}
\newtheorem{corollary}[theorem]{Corollary}
\newtheorem{proposition}[theorem]{Proposition}
\theoremstyle{definition}
\newtheorem{definition}[theorem]{Definition}
\newtheorem{example}[theorem]{Example}
\theoremstyle{remark}
\newtheorem{remarks}[theorem]{Remarks}
\numberwithin{equation}{section}
\newcommand{\forget}[1]{}
\begin{document}
	\emergencystretch 3em
	\title[On J-torsionless modules]{On J-torsionless modules}
	
	\author{Dimpy Mala Dutta, A. M. Buhphang* and M. B. Rege }

	\address{Dimpy M. Dutta, Department of Mathematics, North-Eastern Hill University, Shillong-793022, Meghalaya, India.}
	\email{dimpymdutta@gmail.com}

	\address{A. M. Buhphang, Department of Mathematics, North-Eastern Hill University, Shillong-793022, Meghalaya, India.}
	
	\email{ardeline17@gmail.com}
	
	\address{M. B. Rege, Retired professor, Department of Mathematics, North-Eastern Hill University,
		 Shillong-793022, Meghalaya, India.}
	\email{mb29rege@gmail.com}
	\keywords{ Torsionless module, J-torsionless module, Reject of a module,  JReject of a module.}
	\subjclass[2020]{16D10, 16D40, 16D50, 16N60}

	\begin{abstract}
		In this paper, we introduce the concept of JReject of a class of modules as a generalization of the notion of reject of a class of modules. We also introduce the notion of J-torsionless modules and give a characterization of regularity on the basis of the J-torsionless condition. A necessary and sufficient condition on $R$ is also given for every cyclic module over $R$ to be J-torsionless. Finally, we give a description of self-injective rings over which every module is J-torsionless.
	\end{abstract}
	\maketitle
	\section{Introduction}
	In this paper, unless otherwise stated, $R$ will denote a unital and associative ring. $M^*$ will denote the dual right $R$-module $Hom_R(M,R)$. {\em Nil(R)}, $I(R)$ and {\em J(R)} will denote the set of nilpotent elements, the set of idempotent elements and the Jacobson radical of $R$, respectively. By an $R$-module we mean a left $R$-module and we denote the (left) $R$-modules by $_RM$. 	By $r_R(a)$ (respectively, $l_R(a)$) we mean the set of right (respectively, left) annihilators of $a$ in $R$. Moreover, all modules and homomorphisms are assumed to be left $R$-modules and left $R$-homomorphisms, respectively. For consistency of notation, we shall denote homomorphisms on the side opposite to that of scalars. Anderson and Fuller in \cite{Anderson} defined the {\em reject} of  a class of modules $\mathscr{U}$  in a module $M$, $Rej_M(\mathscr{U})$ as follows.
	\[Rej_M(\mathscr{U})= \cap \lbrace Kerf \ \vert 
	\ f: M \rightarrow U \ \text{for some} \ U \in \mathscr{U} \rbrace.\]
	It was also shown in \cite{Anderson} that if we consider the class of simple left $R$-modules and denote it by $\mathscr{S}$,  then for each module $_RM$, 
	\[Rad(M)=Rej_M(\mathscr{S}),\]
	where Rad(M) is the radical of M. 
	We introduce the concept of \textit{JReject} of a class of modules $\mathscr{U}$ in a module $M$ denoted by $JRej_M(\mathscr{U})$ as follows:
	\[JRej_M(\mathscr{U}) = \lbrace m\in M \ \vert \ mf \in Rad(U), \ \forall \ f: M \rightarrow U \ \text{for some} \ U \in \mathscr{U}\rbrace. \]
	Clearly, if $\mathscr U$ is a singleton set $\{U\}$, we have 
	$$JRej_M(U) = \lbrace m\in M \ \vert \ mf \in Rad(U), \ \forall \ f: M \rightarrow U\rbrace.$$
	Notice that $JRej_M(\mathscr U)$ is a submodule of $M$.
	\section{Results}
	
	It can be easily observed that $Rej_M(\mathscr{U}) \subseteq JRej_M(\mathscr{U})$. However, in general the converse may not hold for arbitrary classes of modules. For instance, consider $\mathscr U$ to be the singleton set $\lbrace \mathbb{Z}_4 \rbrace$ with the underlying ring $R$ to be $\mathbb Z_4$  and $M=\mathbb{Z}_4 \times \mathbb{Z}_4$. Observe that $(\bar{2}, \bar0) \in JRej_M(\mathbb Z_4)$, whereas $(\bar2, \bar0) \notin Rej_M(\mathbb Z_4)$ as $(\bar2, \bar0) \notin Ker (\pi_1),$ where $\pi_1$ is the projection map on the $1^{st}$ component. 
	
	The natural question that arises from the above discussion is that when will these two submodules of $M$ coincide? To provide some insight into this question we consider the following remarks.
	
	\begin{remarks}\label{rejrem1}
		\begin{enumerate}
			\item $Rej_M(\mathscr S) = JRej_M(\mathscr S)$. 
			\item For a semiprimitive ring $R$ and an $R$-module $M$, $$Rej_M(R)=JRej_M(R).$$
		\end{enumerate}
	\end{remarks}
	Recall from \cite{Anderson} that a module $M$ is (finitely) cogenerated by a class of modules $\mathscr U$  if there exists a (finite) indexed set $(U_\alpha)_{\alpha \in A}$ in $\mathscr U$ and a monomorphism 
	$$0 \rightarrow M \xrightarrow{f} \prod\limits_A U_\alpha.$$
	
	\begin{theorem}
		For a class of modules $\mathscr U$, there exists a subclass $\mathscr U^\prime$ of $\mathscr U$ such that for a module $M$, $JRej_M(\mathscr U)$ is the smallest submodule $L$ of $M$ such that $M/L$ is cogenerated by $\mathscr U^\prime$
	\end{theorem}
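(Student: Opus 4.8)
The plan is to imitate the structure of the classical reject characterization, splitting the assertion into an embedding step (showing $M/JRej_M(\mathscr U)$ is cogenerated by $\mathscr U^\prime$) and a minimality step (showing any $L$ with $M/L$ cogenerated by $\mathscr U^\prime$ contains $JRej_M(\mathscr U)$). The natural candidate, built canonically from $\mathscr U$, is the class of radical quotients
$$\mathscr U^\prime=\{\,U/Rad(U)\mid U\in\mathscr U\,\},$$
and the radical enters precisely because the defining condition for $JRej_M$ is membership in $Rad(U)$ rather than vanishing.

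For the embedding step I would index over all pairs $(U,f)$ with $U\in\mathscr U$ and $f\in Hom_R(M,U)$, and assemble the single homomorphism
$$\Phi\colon M\longrightarrow \prod_{(U,f)} U/Rad(U),\qquad m\Phi=\bigl(mf+Rad(U)\bigr)_{(U,f)}.$$
Since homomorphisms are written on the right, $m\in Ker\,\Phi$ says $mf+Rad(U)=0$, i.e.\ $mf\in Rad(U)$, for every $f\colon M\to U$ and every $U\in\mathscr U$; this is exactly the condition $m\in JRej_M(\mathscr U)$. Hence $Ker\,\Phi=JRej_M(\mathscr U)$, so $\Phi$ induces a monomorphism $M/JRej_M(\mathscr U)\hookrightarrow\prod U/Rad(U)$ and $M/JRej_M(\mathscr U)$ is cogenerated by $\mathscr U^\prime$. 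This half is routine once the kernel is identified.

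For minimality, suppose $L\le M$ admits a monomorphism $\iota\colon M/L\hookrightarrow\prod_\beta U_\beta/Rad(U_\beta)$ with $U_\beta\in\mathscr U$. Writing $q\colon M\to M/L$ for the quotient and $p_\beta$ for the projections, set $\psi_\beta=q\iota p_\beta\colon M\to U_\beta/Rad(U_\beta)$, so that $L=Ker(q\iota)=\bigcap_\beta Ker\,\psi_\beta$. It therefore suffices to show $JRej_M(\mathscr U)\subseteq Ker\,\psi_\beta$ for each $\beta$. If $\psi_\beta$ lifts to some $\widetilde{\psi}_\beta\colon M\to U_\beta$ along the canonical epimorphism $U_\beta\to U_\beta/Rad(U_\beta)$, then for $m\in JRej_M(\mathscr U)$ we have $m\widetilde{\psi}_\beta\in Rad(U_\beta)$ by definition, whence $m\psi_\beta=m\widetilde{\psi}_\beta+Rad(U_\beta)=0$. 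This yields $JRej_M(\mathscr U)\subseteq Ker\,\psi_\beta$ and hence $JRej_M(\mathscr U)\subseteq L$, completing the argument.

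The main obstacle is exactly this last lifting: a map $\psi_\beta\colon M\to U_\beta/Rad(U_\beta)$ coming from an arbitrary cogeneration need not factor through $U_\beta$, the obstruction living in $\operatorname{Ext}^1_R(M,Rad(U_\beta))$. The lift is automatic when $M$ is projective, or when each $Rad(U_\beta)$ splits off (in particular is injective), and I would dispatch those cases first. In general I expect that minimality forces a more careful choice of $\mathscr U^\prime$—replacing each $U/Rad(U)$ by the actual image submodules $\{mf+Rad(U)\}$, so that only coordinate maps already induced from $\mathscr U$ can occur—and pinning down the formulation of $\mathscr U^\prime$ under which minimality survives is the delicate point where the real work lies.
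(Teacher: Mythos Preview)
Your approach mirrors the paper's exactly: the paper also sets $\mathscr U^\prime=\{U/Rad(U):U\in\mathscr U\}$, does the embedding step by assembling the coordinate maps $M\to U_\alpha/Rad(U_\alpha)$ induced from $Hom(M,U_\alpha)$, and then for minimality simply asserts that for any $g:M\to\prod_A U_\alpha/Rad(U_\alpha)$ with $Ker(g)=L$ one has $L=\bigcap_A Ker(g\pi_\alpha)\supseteq JRej_M(\mathscr U)$. The paper offers no justification for that last containment; it is precisely the lifting step you isolated as ``the main obstacle,'' and the paper just skips over it.

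Your worry is not merely a technicality: with this choice of $\mathscr U^\prime$ the minimality claim is actually false. Take $R=\mathbb Z_4$, $\mathscr U=\{\mathbb Z_4\}$, so $\mathscr U^\prime=\{\mathbb Z_4/2\mathbb Z_4\}\cong\{\mathbb Z_2\}$, and let $M=\mathbb Z_2$. Every $f\in Hom_{\mathbb Z_4}(\mathbb Z_2,\mathbb Z_4)$ lands in $2\mathbb Z_4=Rad(\mathbb Z_4)$ (since $2\cdot(1f)=0$ forces $1f\in\{0,2\}$), so $JRej_M(\mathscr U)=M$. Yet $M/0=\mathbb Z_2$ is cogenerated by $\mathscr U^\prime$ via the identity map $\mathbb Z_2\to\mathbb Z_2$, and this identity does not lift to $\mathbb Z_4$. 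Hence $L=0$ works while $JRej_M(\mathscr U)=M\not\subseteq 0$. So the obstacle you flagged is fatal to the argument as written, and your instinct that one must restrict $\mathscr U^\prime$ to objects (or maps) already arising from $\mathscr U$ is the right direction; the paper's proof, as it stands, shares the gap you found.
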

	
	\begin{proof}
		We define the subclass $\mathscr U^\prime$ of $\mathscr U$ as the collection of modules of the form $U/Rad(U)$, where $U \in \mathscr U$. It is clear that there is  an indexed set $(U_\alpha / Rad(U_\alpha))_{\alpha \in A}$ in $\mathscr U^\prime$ and homomorphisms $g_\alpha : M \rightarrow U_\alpha / Rad(U_\alpha)$ for each $\alpha \in A$ with $\bigcap_A Ker(g_\alpha)=JRej(\mathscr U)$. Hence, the natural homomorphism $\prod_Ag_\alpha : M \rightarrow \prod_A U_\alpha / Rad(U_\alpha)$ has kernel $JRej_M(\mathscr U)$  \cite[Corollary 6.2]{Anderson}, implying that $JRej_M(\mathscr U)$ is cogenerated by $\mathscr U^\prime$.
		
		Next let $L$ be a module such that $M/L$ is cogenerated by $\mathscr U^\prime$.  Assume that $(U_\alpha / Rad(U_\alpha))_{\alpha \in A}$ is an indexed set in $\mathscr U^\prime$ and $g:M \rightarrow \prod_A U_\alpha/ Rad(U_\alpha)$ be a homomorphism with $L=Ker(g)$, then $L=\bigcap_A Ker(g \pi_\alpha) \supseteq JRej_M(\mathscr U)$. Thus if $M/L$ is cogenerated by $\mathscr U^\prime$, $L \supseteq JRej_M(\mathscr U).$
	\end{proof}
	
	For a module $M$, a class of modules $\mathscr U$ and its subclass $\mathscr U^\prime$ of  modules $U/Rad(U)$, $U \in \mathscr U$, we have the following corollary, which is an analogue to Corollary 8.14 of \cite{Anderson}.
	\begin{corollary}\label{corgen}
		$\mathscr U^\prime$ cogenerates $M$ iff $JRej_M(\mathscr U)=0$.
	\end{corollary}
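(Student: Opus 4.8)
The plan is to derive this directly from the preceding Theorem, which characterises $JRej_M(\mathscr U)$ as the smallest submodule $L$ of $M$ for which $M/L$ is cogenerated by $\mathscr U^\prime$, where $\mathscr U^\prime$ is the class of modules $U/Rad(U)$ with $U \in \mathscr U$. The one observation that makes everything go through is that the statement ``$\mathscr U^\prime$ cogenerates $M$'' is literally the statement ``$M/0$ is cogenerated by $\mathscr U^\prime$'', since cogeneration is the existence of a monomorphism $0 \to M \to \prod_A U_\alpha/Rad(U_\alpha)$. Thus cogeneration of $M$ is precisely the special case $L = 0$ of the relation controlled by the Theorem, and both implications of the corollary become formal consequences of the extremality (smallest-ness) built into $JRej_M(\mathscr U)$.

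For the forward implication I would assume $\mathscr U^\prime$ cogenerates $M$. Then $L = 0$ is a submodule of $M$ with $M/L$ cogenerated by $\mathscr U^\prime$. Since the Theorem guarantees that $JRej_M(\mathscr U)$ is the \emph{smallest} such submodule, I obtain the inclusion $JRej_M(\mathscr U) \subseteq 0$, and hence $JRej_M(\mathscr U) = 0$.

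For the reverse implication I would assume $JRej_M(\mathscr U) = 0$. By the Theorem this zero submodule is itself the smallest $L$ with $M/L$ cogenerated by $\mathscr U^\prime$; in particular $M/0 = M$ is cogenerated by $\mathscr U^\prime$, which is exactly the assertion that $\mathscr U^\prime$ cogenerates $M$.

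I do not anticipate a genuine obstacle here: the mathematical content is entirely absorbed into the Theorem, and the proof reduces to unwinding the definition of cogeneration and invoking the smallest-submodule property in each direction. The only points demanding a line of care are the harmless identification of ``$M$ is cogenerated'' with ``$M/0$ is cogenerated'' and the explicit use of minimality to convert ``$L=0$ works'' into ``$JRej_M(\mathscr U) = 0$''.
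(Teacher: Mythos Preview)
Your proposal is correct and matches the paper's approach: the paper states Corollary~\ref{corgen} without proof, treating it as an immediate consequence of the preceding Theorem, which is exactly what you do by specialising to $L=0$ and invoking minimality.
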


	We have already discussed the relation between the reject and the JReject of a given class of modules in a particular module M. This prompts us to question as to how the reject and the JReject of two distinct classes of modules are connected while the underlying module remains the same? Are there specific conditions under which we can expect to obtain a comparable outcome?  While there may be many answers to this question, (for example \cite[Lemma 8.19(2)]{Anderson}), in the following proposition we record another response to it in terms of a subclass of one of the two given classes of modules.
	
	Let $\mathscr U$ and $\mathscr V$ be two classes of modules and $\mathscr V^\prime$ be the subclass of $\mathscr V$ consisting of modules $V/Rad(V)$ for $V \in \mathscr V$. Then, 
	\begin{proposition}
			If $\mathscr V^\prime \subseteq Cog(\mathscr U)$ then, $ Rej_M(\mathscr U) \leq JRej_M(\mathscr V).$
	\end{proposition}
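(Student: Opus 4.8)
The plan is to establish the inclusion $Rej_M(\mathscr U) \subseteq JRej_M(\mathscr V)$ directly from the definitions, using the cogeneration hypothesis to transport information from homomorphisms into $\mathscr U$ to homomorphisms into $\mathscr V$. So I would begin by fixing an element $m \in Rej_M(\mathscr U)$; by definition this means $mg = 0$ for every homomorphism $g : M \to U$ with $U \in \mathscr U$. The goal is then to verify the membership condition defining $JRej_M(\mathscr V)$, namely that $mf \in Rad(V)$ for every $V \in \mathscr V$ and every $f : M \to V$.

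Next I would fix such a $V$ and $f$ and bring in the canonical projection $\rho : V \to V/Rad(V)$, whose kernel is precisely $Rad(V)$. Since $V/Rad(V) \in \mathscr V^\prime$ and $\mathscr V^\prime \subseteq Cog(\mathscr U)$ by hypothesis, the module $V/Rad(V)$ is cogenerated by $\mathscr U$; that is, there is an indexed family $(U_\alpha)_{\alpha \in A}$ of members of $\mathscr U$ together with a monomorphism $h : V/Rad(V) \to \prod_A U_\alpha$.

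The key step is then a diagram chase through this embedding. For each $\alpha \in A$, the composite $f\rho h\pi_\alpha : M \to U_\alpha$ (where $\pi_\alpha$ denotes the projection onto the $\alpha$-th factor) is a homomorphism from $M$ into the member $U_\alpha$ of $\mathscr U$, so $m$ is annihilated by each of them. Consequently $(mf)\rho h$ has all its coordinates equal to $0$ and therefore vanishes in $\prod_A U_\alpha$; since $h$ is a monomorphism, this forces $(mf)\rho = 0$, i.e. $mf \in Ker(\rho) = Rad(V)$. As $V$ and $f$ were arbitrary, $m \in JRej_M(\mathscr V)$, which yields the desired inclusion.

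I do not anticipate a genuine obstacle: once the hypothesis $\mathscr V^\prime \subseteq Cog(\mathscr U)$ is unwound into the explicit embedding $h$, the argument is a routine verification. The only points demanding care are the bookkeeping imposed by the convention of writing homomorphisms opposite to the scalars, so that composites read left to right as $f\rho h\pi_\alpha$, and the observation that the family $(U_\alpha)$ supplied by cogeneration genuinely consists of members of $\mathscr U$, so that membership of $m$ in $Rej_M(\mathscr U)$ may legitimately be invoked for each coordinate map.
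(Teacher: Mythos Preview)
Your proposal is correct and follows essentially the same approach as the paper: both arguments compose the given $f:M\to V$ with the projection $V\to V/Rad(V)$ and then invoke the cogeneration hypothesis to produce a map into some $U\in\mathscr U$ that witnesses the contradiction (the paper) or forces the vanishing (your direct version). The only cosmetic differences are that the paper argues by contradiction and uses the ``for each nonzero element there is a separating map'' form of cogeneration, whereas you argue directly via the embedding into a product; these are equivalent formulations and the underlying idea is identical.
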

	\begin{proof}
		Take $ m \in Rej_M(\mathscr U)$ such that  $m \notin JRej_M(\mathscr V)$. Thus there is $h \in Hom(M,V)$ for some $V \in \mathscr V$ with $mh \notin Rad(V)$. Given $\mathscr V^\prime \subseteq Cog(\mathscr U)$, there exists a homomorphism $f:V/Rad(V) \rightarrow U$ for some $U\in \mathscr U$ such that $mh+Rad(V) \notin Kerf$. Considering the natural projection map $\eta: V \rightarrow V/Rad(V)$, we have $h \eta f \in Hom(M,U)$ with $m \notin Ker (h \eta f)$. This gives us a contradiction that $m \notin Rej_M(\mathscr U)$. Therefore, $m \in JRej_M(\mathscr V)$.
	\end{proof}
		We now obtain a result which relates for a given module $M$, the JReject of a class $\mathscr U$ of modules in $M$ and the JReject of $\mathscr U$ in a factor module of $M$. 
		\begin{proposition}
			Let $M$ be any module and $\mathscr U$ be a class of modules. If $L \leq M$, such that $L \leq JRej_M(\mathscr U)$ and $JRej_{M/L}(\mathscr U)$=0, then $L=JRej_M(\mathscr U)$.
		\end{proposition}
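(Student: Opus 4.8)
The plan is to prove the two inclusions $L \leq JRej_M(\mathscr U)$ and $JRej_M(\mathscr U) \leq L$ separately. The first is handed to us directly as a hypothesis, so the real work lies in establishing $JRej_M(\mathscr U) \leq L$, and the only tool available for this is the assumption $JRej_{M/L}(\mathscr U)=0$. The natural bridge between the two JRejects is the canonical projection $\eta: M \rightarrow M/L$, and the governing observation is that precomposing $\eta$ with any homomorphism out of $M/L$ yields a homomorphism out of $M$ to which the defining membership condition for $JRej_M(\mathscr U)$ applies.

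Concretely, I would take an arbitrary $m \in JRej_M(\mathscr U)$ and aim to show that its image $m+L$ lies in $JRej_{M/L}(\mathscr U)$; since the latter is zero, this forces $m+L = 0$, i.e. $m \in L$, which is exactly the inclusion we want. To verify $m+L \in JRej_{M/L}(\mathscr U)$, I fix an arbitrary homomorphism $\bar f : M/L \rightarrow U$ with $U \in \mathscr U$ and form the composite $f = \eta\bar f : M \rightarrow U$. Because $m \in JRej_M(\mathscr U)$, its defining property gives $mf \in Rad(U)$; and reading the right-hand homomorphism notation carefully, $(m+L)\bar f = (m\eta)\bar f = m(\eta\bar f) = mf$, so $(m+L)\bar f \in Rad(U)$. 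As $\bar f$ was arbitrary, $m+L \in JRej_{M/L}(\mathscr U) = 0$, hence $m \in L$. Combining this with the hypothesized inclusion $L \leq JRej_M(\mathscr U)$ delivers the equality.

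I do not expect a genuine obstacle here; the argument is essentially a factorization through the projection. The single point that demands care is the bookkeeping with the side-opposite-to-scalars convention for homomorphisms, so that the identity $(m+L)\bar f = mf$ is recorded in the correct order (just as the composite $h\eta f$ was formed in the preceding proposition). It is also worth flagging that the hypothesis $L \leq JRej_M(\mathscr U)$ supplies only one of the two inclusions and plays no role in the factorization step itself, while the hypothesis $JRej_{M/L}(\mathscr U)=0$ is what drives the reverse inclusion.
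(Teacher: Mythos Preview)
Your proposal is correct and follows essentially the same approach as the paper: both arguments use the canonical projection $\eta: M \to M/L$ to pass between $JRej_M(\mathscr U)$ and $JRej_{M/L}(\mathscr U)$ via the composite $\eta \bar f$. The only cosmetic difference is that the paper phrases the reverse inclusion as a proof by contradiction (assume $m \notin L$, obtain $\bar f$ with $(m+L)\bar f \notin Rad(U)$, contradict $m \in JRej_M(\mathscr U)$), whereas you give the equivalent direct argument.
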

		\begin{proof}
			Given $L \leq JRej_M(\mathscr U),$ we need to prove that $JRej_M(\mathscr U) \leq L$. If possible, choose $m \in JRej_M(\mathscr U)$ such that $m \notin L$. This gives us the implication $m+L \nsubseteq JRej_{M/L}(\mathscr U)$, further implying that there exists $h \in Hom(M/L,U)$ for some $U \in \mathscr U$ with $(m+L)h \notin Rad(U)$. If $\eta$ is the natural projection map from $M$ to $M/L$, it is observed that $\tilde{h}= \eta h \in Hom(M,U)$. Again, $m \tilde{h} \notin Rad(U)$, suggesting $m \notin JRej_M(\mathscr U)$, which contradicts our assumption. Hence $m\in L$, yielding that $L=JRej_M(\mathscr U)$.
		\end{proof}
	In what follows, we show that the structure of JReject of a class of modules is preserved under a module homomorphism.
	\begin{proposition}\label{projr}
		Let $\mathscr U$ be a class of modules and $M$, $N$ be any two modules. Then for a homomorphism $h:M \rightarrow N$, $$JRej_M(\mathscr U)h \leq JRej_N(\mathscr U)).$$
	\end{proposition}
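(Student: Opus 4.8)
The plan is to unwind both defining conditions and exploit composition of homomorphisms, keeping in mind the convention that maps act on the side opposite to scalars (so for $h \colon M \to N$ and $m \in M$ we write $mh \in N$). First I would take an arbitrary element of the image $JRej_M(\mathscr U)h$; by definition it has the form $mh$ for some $m \in JRej_M(\mathscr U)$. To show $mh \in JRej_N(\mathscr U)$, I must verify that for every $U \in \mathscr U$ and every homomorphism $g \colon N \to U$, the element $(mh)g$ lies in $Rad(U)$.

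The crucial observation is that $hg \colon M \to U$ is itself a homomorphism into a member of $\mathscr U$, and that the right-action convention gives the associativity $(mh)g = m(hg)$. Since $m$ was chosen in $JRej_M(\mathscr U)$, the defining property applied to the map $hg$ yields $m(hg) \in Rad(U)$, whence $(mh)g \in Rad(U)$. As $U$ and $g$ were arbitrary, this is precisely the statement that $mh \in JRej_N(\mathscr U)$, and letting $m$ range over $JRej_M(\mathscr U)$ gives the desired inclusion.

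There is no substantial obstacle here; the result is essentially a naturality statement for the JReject construction. The only point requiring genuine care is the bookkeeping with the opposite-side notation for homomorphisms, so that it is the composite $hg$ (and not $gh$) which is the correct map $M \to U$, and so that the identity $(mh)g = m(hg)$ is applied correctly. I would also note in passing that $JRej_M(\mathscr U)h$ is a submodule of $N$, being the homomorphic image of the submodule $JRej_M(\mathscr U)$, so that the claimed relation $\leq$ is genuinely an inclusion of submodules.
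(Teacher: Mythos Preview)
Your proof is correct and follows essentially the same approach as the paper's: both arguments pick $m \in JRej_M(\mathscr U)$, observe that for any $g \in \mathrm{Hom}(N,U)$ the composite $hg$ lies in $\mathrm{Hom}(M,U)$, and conclude $(mh)g = m(hg) \in Rad(U)$. Your version is just more explicit about the notational conventions and the associativity step.
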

	\begin{proof}
		First we observe that for $g \in Hom(N,U)$, $hg \in Hom(M,U)$ for any $U \in \mathscr U$. Let $m \in JRej_M(\mathscr U)$. Then for $U \in \mathscr U$ and each $g \in Hom(N,U)$, $mhg \in Rad(U)$. Thus $JRej_M(\mathscr U)h \leq JRej_N(\mathscr U)$.
	\end{proof}
	\begin{corollary}
		If $h:M \rightarrow N$ is an epimorphism such that $Ker( h) \subseteq Rej_M(\mathscr U)$, then $JRej_M(\mathscr U)h = JRej_N(\mathscr U).$
\end{corollary}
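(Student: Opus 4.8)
The plan is to prove the two inclusions separately, observing that one of them is already available. Since $h$ is in particular a homomorphism $M \to N$, Proposition~\ref{projr} immediately yields $JRej_M(\mathscr U)h \leq JRej_N(\mathscr U)$. Thus the entire content of the corollary lies in establishing the reverse containment $JRej_N(\mathscr U) \leq JRej_M(\mathscr U)h$, and it is here that both hypotheses (surjectivity of $h$ and $Ker(h) \subseteq Rej_M(\mathscr U)$) will be used.

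For the reverse inclusion I would start with an element $n \in JRej_N(\mathscr U)$. Using that $h$ is an epimorphism, I first lift $n$ to some $m \in M$ with $mh = n$. The goal then becomes to show that this preimage can be taken inside $JRej_M(\mathscr U)$; concretely, I would prove that the chosen $m$ already lies in $JRej_M(\mathscr U)$, so that $n = mh \in JRej_M(\mathscr U)h$. By definition of $JRej_M(\mathscr U)$, this amounts to checking that $mf \in Rad(U)$ for every $U \in \mathscr U$ and every $f \in Hom(M,U)$.

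The key step, and the one I expect to be the crux of the argument, is a factorization of each such $f$ through $h$. For any $f : M \to U$ with $U \in \mathscr U$, the definition of reject gives $Rej_M(\mathscr U) \subseteq Ker(f)$, and combining this with the hypothesis $Ker(h) \subseteq Rej_M(\mathscr U)$ produces $Ker(h) \subseteq Ker(f)$. Since $h$ is surjective with kernel contained in $Ker(f)$, the factor theorem supplies a homomorphism $\bar f : N \to U$ with $f = h\bar f$. Then $mf = (mh)\bar f = n\bar f$, and because $n \in JRej_N(\mathscr U)$ and $\bar f \in Hom(N,U)$ we obtain $n\bar f \in Rad(U)$, hence $mf \in Rad(U)$. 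As $f$ was arbitrary, $m \in JRej_M(\mathscr U)$, giving $n \in JRej_M(\mathscr U)h$ and therefore $JRej_N(\mathscr U) \leq JRej_M(\mathscr U)h$. Together with Proposition~\ref{projr} this yields the claimed equality. The only point requiring care is the bookkeeping of the right-hand notation for homomorphisms, so that the factorization reads $m(h\bar f) = (mh)\bar f$ correctly; the factorization itself is where the hypothesis on $Ker(h)$ is essential, since without it an arbitrary $f$ need not descend to $N$.
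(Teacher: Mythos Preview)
Your proof is correct and follows exactly the same strategy as the paper: lift $n$ to $m$ via surjectivity, use $Ker(h)\subseteq Rej_M(\mathscr U)\subseteq Ker(f)$ to factor every $f:M\to U$ through $h$ via the Factor Theorem, and then invoke $n\in JRej_N(\mathscr U)$ to conclude $mf\in Rad(U)$. Your write-up is in fact slightly more precise than the paper's, which at the corresponding step writes ``$mf=mhg=0$'' and deduces $m\in Rej_M(\mathscr U)$; only $mf=ng\in Rad(U)$ is actually justified, and the correct conclusion is $m\in JRej_M(\mathscr U)$, exactly as you have it.
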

	\begin{proof}
			Let $n \in JRej_N(\mathscr U)$. Since $h$ is epic, there exists $m \in M$ such that $mh=n$. Now for an arbitrary homomorphism $f: M \rightarrow U$, $Rej_M(\mathscr U) \subseteq Ker (f)$ and so by hypothesis, $Ker(h) \subseteq Rej_M(\mathscr U) \subseteq Ker (f) $. By the Factor Theorem \cite[Theorem 3]{Anderson}, there exists a unique $g \in Hom(N,U)$ such that $f=hg$. Now, $mf=mhg=0$. Since for any $U \in \mathscr U$, the homomorphism $f$ was arbitrary, we may conclude that $m \in Rej_M(\mathscr U)$, implying $JRej_N(\mathscr U) \leq JRej_M(\mathscr U)h$. Thus using Proposition \ref{projr}, $JRej_M(\mathscr U)h = JRej_N(\mathscr U)$.
		\end{proof}
	
The next proposition proves that JReject of a class of modules $\mathscr U$ distributes  over direct sums of modules.
	\begin{proposition}
		If $(N_\alpha)_{\alpha \in A}$ is an indexed set of modules, then
		$$JRej_{\left(\bigoplus_A N_\alpha\right)}(\mathscr U)=\bigoplus_AJRej_{N_\alpha}(\mathscr U).$$
	\end{proposition}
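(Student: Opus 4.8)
The plan is to prove the asserted equality by establishing the two inclusions separately. Write $M=\bigoplus_A N_\alpha$ and work with the canonical injections $\iota_\alpha:N_\alpha\to M$ and projections $\pi_\alpha:M\to N_\alpha$ of the direct sum. Throughout I would identify $\bigoplus_A JRej_{N_\alpha}(\mathscr U)$ with the submodule $\sum_A JRej_{N_\alpha}(\mathscr U)\iota_\alpha$ of $M$, so that a typical element of it is a tuple $(m_\alpha)_{\alpha\in A}$ of finite support with each $m_\alpha\in JRej_{N_\alpha}(\mathscr U)$; this makes both sides of the claimed identity submodules of the same module $M$, and it suffices to compare them as such.

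For the inclusion $\bigoplus_A JRej_{N_\alpha}(\mathscr U)\leq JRej_M(\mathscr U)$, I would apply Proposition \ref{projr} to each injection $\iota_\alpha:N_\alpha\to M$, which yields $JRej_{N_\alpha}(\mathscr U)\iota_\alpha\leq JRej_M(\mathscr U)$ for every $\alpha$. Since $JRej_M(\mathscr U)$ is a submodule of $M$, it contains the sum of these images, and that sum is exactly $\bigoplus_A JRej_{N_\alpha}(\mathscr U)$ under the identification above. Equivalently, one can argue directly: for $x=(x_\alpha)$ of finite support and any $f:M\to U$ with $U\in\mathscr U$, additivity of $f$ gives $xf=\sum_\alpha x_\alpha(\iota_\alpha f)$, a finite sum whose terms lie in $Rad(U)$ because $x_\alpha\in JRej_{N_\alpha}(\mathscr U)$ and $\iota_\alpha f\in Hom(N_\alpha,U)$; as $Rad(U)$ is a submodule, $xf\in Rad(U)$, so $x\in JRej_M(\mathscr U)$.

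For the reverse inclusion I would exploit the projections. Take $m=(m_\alpha)_{\alpha\in A}\in JRej_M(\mathscr U)$ and fix an index $\beta\in A$. To show $m_\beta\in JRej_{N_\beta}(\mathscr U)$, let $g:N_\beta\to U$ be arbitrary with $U\in\mathscr U$. Then $\pi_\beta g:M\to U$ is a homomorphism, so $m(\pi_\beta g)\in Rad(U)$ by the choice of $m$; but $m\pi_\beta=m_\beta$, whence $m_\beta g\in Rad(U)$. As $g$ was arbitrary, $m_\beta\in JRej_{N_\beta}(\mathscr U)$. Since $m$ already has finite support as an element of $\bigoplus_A N_\alpha$, this proves $m\in\bigoplus_A JRej_{N_\alpha}(\mathscr U)$, and the two inclusions together give the result.

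I do not anticipate a genuine obstacle here: the only points requiring care are the bookkeeping of the finite-support condition in the direct sum (which is what makes $\sum_\alpha x_\alpha(\iota_\alpha f)$ a finite, well-defined sum in the first inclusion) and the repeated appeal to the fact that $Rad(U)$ is a submodule closed under finite sums. The structural identity $m\pi_\beta=m_\beta$, together with the availability of Proposition \ref{projr} for the injections, is precisely what drives the argument, so the work is essentially a matter of assembling these observations correctly.
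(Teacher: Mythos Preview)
Your proposal is correct and follows essentially the same route as the paper: both inclusions are obtained from Proposition~\ref{projr} applied to the canonical injections $\iota_\alpha$ and projections $\pi_\alpha$ of the direct sum, together with the identification $\bigoplus_A JRej_{N_\alpha}(\mathscr U)=\sum_A JRej_{N_\alpha}(\mathscr U)\iota_\alpha$. The paper packages the reverse inclusion as $JRej_M(\mathscr U)=\sum JRej_M(\mathscr U)\pi_\alpha\iota_\alpha\leq \sum JRej_{N_\alpha}(\mathscr U)\iota_\alpha$ via Proposition~\ref{projr} for $\pi_\alpha$, which is exactly what your elementwise argument with $\pi_\beta g$ unpacks.
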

	\begin{proof}By Proposition \ref{projr} we have the following two relations $$JRej_{\left(\bigoplus_A N_\alpha\right)}(\mathscr U) = \sum  JRej_{\left(\bigoplus_A N_\alpha\right)}(\mathscr U)  \pi_\alpha i_\alpha \leq \sum  JRej_{N_\alpha}(\mathscr U) i_\alpha,$$
			$$\text{and,}  \hspace{.5cm} JRej_{N_\alpha}(\mathscr U) i_\alpha \leq JRej_{\left(\bigoplus_A N_\alpha\right)}(\mathscr U),$$
		where  for each $\alpha \in A$, $i_\alpha$ and $\pi_\alpha$ are the natural injection and projection maps respectively. The second equation further implies that
		$$\sum JRej_{N_\alpha}(\mathscr U) i_\alpha  \leq JRej_{\left(\bigoplus_A N_\alpha\right)}(\mathscr U),$$
		i.e., 
		$$\sum JRej_{N_\alpha}(\mathscr U) i_\alpha = JRej_{\left(\bigoplus_A N_\alpha\right)}(\mathscr U).$$
		But,  $$\sum JRej_{N_\alpha}(\mathscr U) i_\alpha = \bigoplus_AJRej_{N_\alpha}(\mathscr U).$$
		Consequently,  $JRej_{\left(\bigoplus_A N_\alpha\right)}(\mathscr U)=\bigoplus_AJRej_{N_\alpha}(\mathscr U).$
	\end{proof}
	We shall use the notation $l_R(m)~ (\text{respectively,}~ r_R(m))$ for the set of all left (respectively, right) annihilators of an element $m$ of a module $_RM$ in $R$. We will simply write $l(m)$ or $r(m)$ if the underlying ring is obvious. We recall that an $R$-module is faithful if $l_R(m)=0$ for all $m\in M$.
	
	So far we imposed conditions to explore the structure of the JReject of  a class of modules in a module. We are now intrigued to observe the structure of the JReject of a particular module in the underlying ring.

	\begin{proposition}\label{jerect is annihilator}
		For each $R$-module $M$, the submodule $M/Rad(M)$ is faithful iff $M/Rad(M)$ cogenerates $R$.
	\end{proposition}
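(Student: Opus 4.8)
Set $N = M/Rad(M)$. The plan is to translate both sides of the claimed equivalence into statements about a single submodule of $R$, namely the reject $Rej_R(N)$, and then to identify that reject with the annihilator of $N$. The useful preliminary observation is that $Rad(N) = Rad\big(M/Rad(M)\big) = 0$; consequently, for the singleton class $\{N\}$ the JReject collapses to the ordinary reject,
\[
JRej_R(N) = \{\, r \in R : rf \in Rad(N)\ \text{for all}\ f : R \to N \,\} = \{\, r \in R : rf = 0\ \text{for all}\ f \,\} = Rej_R(N).
\]
This is what lets me feed the situation into Corollary \ref{corgen} (applied with $\mathscr U = \{N\}$, for which $\mathscr U^\prime = \{N/Rad(N)\} = \{N\}$ and the cogenerated module is $R$) instead of re-deriving the classical cogeneration criterion.

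First I would record the standard bijection $Hom_R(R,N) \cong N$ which, in the paper's convention of writing maps opposite to scalars, sends $f$ to $1f$; conversely each $n \in N$ gives the homomorphism $r \mapsto rn$. The kernel of the homomorphism attached to $n$ is exactly $\{\, r \in R : rn = 0 \,\} = l_R(n)$, so intersecting over all homomorphisms yields
\[
Rej_R(N) = \bigcap_{f \in Hom_R(R,N)} Ker(f) = \bigcap_{n \in N} l_R(n),
\]
which is precisely the annihilator of the module $N$. By definition $N$ is faithful exactly when this annihilator is zero.

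Finally I would assemble the pieces. By Corollary \ref{corgen}, $N$ cogenerates $R$ iff $JRej_R(N) = 0$, and by the first paragraph this equals $Rej_R(N)$, which by the second paragraph equals $\bigcap_{n\in N} l_R(n)$. Hence $N$ cogenerates $R$ iff the annihilator of $N$ vanishes, i.e. iff $N$ is faithful, as required. The only step needing genuine care is the Hom--annihilator identification under the opposite-side convention --- one must check that the kernel of the map determined by $n$ is the left annihilator $l_R(n)$ rather than a right annihilator --- together with the remark $Rad(M/Rad(M)) = 0$ that makes reject and JReject coincide here; beyond this the argument is purely formal, the real content being the translation of ``$N$ cogenerates $R$'' into ``the annihilator of $N$ is zero''.
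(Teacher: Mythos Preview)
Your proof is correct and follows essentially the same approach as the paper: identify the relevant (J)Reject in $R$ with the left annihilator of $M/Rad(M)$ via the isomorphism $Hom_R(R,-)\cong(-)$, and then invoke Corollary~\ref{corgen}. The only cosmetic difference is that the paper applies Corollary~\ref{corgen} with $\mathscr U=\{M\}$ and computes $JRej_R(M)=l_R(M/Rad(M))$ directly, whereas you first pass to $N=M/Rad(M)$, use $Rad(N)=0$ to collapse $JRej_R(N)$ to $Rej_R(N)$, and apply Corollary~\ref{corgen} with $\mathscr U=\{N\}$; the content is the same.
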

	\begin{proof}
		Recalling the fact that every $R$-module $M$ is isomorphic to $Hom_R(R,M) $ \cite[Proposition 4.5]{Anderson}, we may write
		\begin{align*}
		JRej_R(M) & =  \bigcap \{ r \in R ~ \vert ~ rf \in Rad(M), ~ \forall ~ f \in Hom_R(R,M) \}
		\\ & = \bigcap \{ r \in R ~ \vert rm \in Rad(M), ~ \forall ~ m \in M \}
		\\ & = \underset{m \in M} \bigcap {l_R (m+Rad(M))}
		\\ &={l_R(M/Rad(M))}.
		\end{align*}
		Thus assuming Corollary \ref{corgen}, we obtain $M/Rad(M)$ is faithful iff $M/Rad(M)$ cogenerates $R$.
	\end{proof}
	In the light of Proposition \ref{jerect is annihilator}, for a class $\mathscr U$ of $R$-modules, we define the JReject of $\mathscr U$ in $R$ as the annihilator of the subclass $\mathscr U^\prime$, where $\mathscr U^\prime$ remains as previously mentioned. This further leads us to the following corollary.
	
	\begin{corollary}
		For each class $\mathscr U$ of $R$-modules, $JRej_R(\mathscr U)$ is a two-sided ideal of $R$.
	\end{corollary}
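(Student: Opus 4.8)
The plan is to start from the characterisation of $JRej_R(\mathscr U)$ recorded immediately before the corollary: by the definition given there, it is the annihilator in $R$ of the subclass $\mathscr U^\prime$, and since the annihilator of a collection of modules is the intersection of the annihilators of its members, this reads
\[JRej_R(\mathscr U) = \bigcap_{U \in \mathscr U} l_R\!\left(U/Rad(U)\right).\]
Because any intersection of two-sided ideals is again a two-sided ideal, it will be enough to show that for each fixed $U \in \mathscr U$ the left annihilator $l_R(U/Rad(U))$ is a two-sided ideal of $R$.

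Writing $V = U/Rad(U)$, I would first observe that $l_R(V) = \{r \in R \mid rV = 0\}$ is an additive subgroup and a left ideal: for $r \in l_R(V)$ and $s \in R$ one has $(sr)x = s(rx) = 0$ for every $x \in V$, so $sr \in l_R(V)$. The remaining point is right-ideal closure, and this is where the module structure is actually used: for $r \in l_R(V)$, $s \in R$ and $x \in V$, the element $sx$ again lies in $V$ since $V$ is a left $R$-module, whence $(rs)x = r(sx) = 0$ and $rs \in l_R(V)$. Thus $l_R(U/Rad(U))$ is a two-sided ideal for each $U$, and intersecting over $U \in \mathscr U$ gives that $JRej_R(\mathscr U)$ is a two-sided ideal.

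I do not anticipate any genuine obstacle here; the argument is the standard verification that the annihilator of a left module is two-sided, applied memberwise. The one step worth flagging is the right-ideal closure, which depends essentially on $U/Rad(U)$ being closed under the left action of $R$ — that is, on $\mathscr U^\prime$ consisting of genuine left $R$-modules rather than arbitrary subsets — so this is the only place where the hypothesis is really invoked.
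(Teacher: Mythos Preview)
Your proposal is correct and matches the paper's approach: the paper does not write out a proof at all, simply recording the corollary as an immediate consequence of the preceding definition of $JRej_R(\mathscr U)$ as the annihilator of $\mathscr U^\prime$. You have merely made explicit the standard verification that the left annihilator of a left $R$-module is a two-sided ideal and that two-sided ideals are closed under intersection, which is exactly what the paper is tacitly invoking.
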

	
	We conclude this section by defining the JReject of a ring $R$ in an $R$-module $M$, $JRej_M(R)$ and record a lemma establishing a comparison between radical of $M$ and JReject of $R$ in $M$. This definition will serve as the base of the next section.
		$$JRej_M(R)=\{ m \in M ~ \vert ~ mM^* \subseteq J(R)\}.$$
		
			\begin{lemma}
				For a module $_RM$, Rad(M) $\subseteq JRej_M(R).$
			\end{lemma}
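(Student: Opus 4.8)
The plan is to reduce the inclusion to two standard facts about the radical: that $Rad$ is \emph{functorial}, in the sense that any homomorphism carries the radical of its source into the radical of its target, and that the radical of $R$ regarded as a left module over itself coincides with the Jacobson radical, i.e. $Rad({}_RR)=J(R)$. Granting these, the containment is almost immediate. For $m\in Rad(M)$ and any $f\in M^{*}=Hom_R(M,R)$ one would get $mf\in Rad(M)f\subseteq Rad({}_RR)=J(R)$, so that $mM^{*}\subseteq J(R)$ and hence $m\in JRej_M(R)$, as required.

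So the actual work is to justify the functoriality step in a self-contained way, using the characterization $Rad(M)=Rej_M(\mathscr S)$ recalled in the introduction. First I would fix $m\in Rad(M)$ and $f\in M^{*}$. For an arbitrary simple left $R$-module $S$ and an arbitrary homomorphism $g\colon R\to S$, I form the composite $fg\colon M\to S$, written on the side opposite the scalars so that $m(fg)=(mf)g$ (this matches the convention used, for instance, in the epimorphism corollary above). Since $m\in Rad(M)=Rej_M(\mathscr S)$ annihilates every map from $M$ into a simple module, $m(fg)=0$, whence $(mf)g=0$, i.e. $mf\in Ker(g)$. As $S$ and $g$ were arbitrary, $mf\in Rej_R(\mathscr S)=Rad({}_RR)=J(R)$. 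Letting $f$ range over $M^{*}$ then gives $mM^{*}\subseteq J(R)$.

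The only non-formal ingredient is the identification $Rad({}_RR)=J(R)$, which holds because the maximal submodules of ${}_RR$ are exactly the maximal left ideals of $R$, so their intersection is by definition $J(R)$; I do not expect this to be a genuine obstacle, as the whole argument is essentially the transport of radicals along a dual element. It is also worth noting that the lemma is consistent with the general notion of $JRej$: for the singleton class $\mathscr U=\{{}_RR\}$, the defining condition ``$mf\in Rad({}_RR)$ for all $f$'' is precisely ``$mf\in J(R)$ for all $f\in M^{*}$'', so the displayed definition of $JRej_M(R)$ agrees with the earlier one, and the lemma is a special case of the functorial estimate. The reverse inclusion generally fails, which is why the statement is phrased as a one-sided containment.
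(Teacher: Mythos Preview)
Your proof is correct and follows essentially the same approach as the paper: both use that any $\theta\in M^{*}$ carries $Rad(M)$ into $Rad({}_RR)=J(R)$, and then read off the inclusion from the definition of $JRej_M(R)$. The only difference is cosmetic---the paper invokes \cite[Proposition~9.14]{Anderson} for the functoriality of $Rad$, whereas you unpack that step directly via $Rad(M)=Rej_M(\mathscr S)$.
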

			\begin{proof}
				It follows from Proposition 9.14 of \cite{Anderson} that for any $\theta \in M^*$, $(Rad(M)) \theta  \subseteq J(R).$ Thus {\em Rad(M)} $\subseteq JRej_M(R).$
			\end{proof}
			The converse, however is not true in general. For instance, consider $R= \mathbb{Z}$, $M=\mathbb{Z}_2$. Then $Rad(M)=0$ while $JRej_M(R)= \mathbb{Z}_2.$ However, when $M=R$, $Rad(R)=JRej(R)=J(R)$.

	\section{J-torsionless modules}
	In this section, we introduce the class of {\em J-torsionless} modules which serves as a subclass of the class of torsionless modules. The torsionless modules over a ring $R$, introduced by H. Bass (see \cite{Lam2}), are subdirect products of copies of $R$. This indicates that many of the properties exhibited by a ring are transferred to the torsionless modules over the ring. For instance, a torsionless module becomes Z-regular or semiprime if the underlying ring is von Neumann regular or semiprime, repectively. In ring and module theory, it is customary to analyze the existing module notions in terms of the factor modules $M/Rad(M)$ for any module $M$, or the factor ring $R/J(R)$. With this view in mind, we introduce the notion of {\em J-torsionless} modules as follows.
	
	\begin{definition}
		An $R$-module $M$ is {\em J-torsionless} if $JRej_M(R)=0.$
	\end{definition}
	\begin{remarks} \label{rejrem2}
		\begin{enumerate} \item  An $R$-module is {\em J- torsionless} $\iff mM^* \nsubseteq J(R)$ for all $m \neq 0.$
			\item It is easy to check that the module $_RR$ is J-torsionless if and only if $R$ is semiprimitive. Thus a ring that is semi-simple as a module over itself is also J-torsionless. But the converse is not true as the ring of integers $\mathbb{Z}$ is semiprimitive and hence  $_\mathbb{Z}\mathbb{Z}$ is a J-torsionless module while it is not semi-simple. 
			\item Since $Rej_M(R) \subseteq JRej_M(R)$,  if $M$ is J-torsionless then it is  torsionless.
		\end{enumerate}
	\end{remarks}
	\begin{proposition}\label{base}
		Following are some basic properties of a J-torsionless module.
		\begin{enumerate}
			\item[\rm (1)] Submodules of a J-torsionless $R$-module are again J-torsionless.
			\item[\rm (2)] Direct sums of J-torsionless $R$-modules are J-torsionless.
			\item[\rm (3)] Direct summands of a J-torsionless $R$-module are J-torsionless.
			\item[\rm(4)] Direct products of J-torsionless $R$-modules are J-torsionless.
		\end{enumerate}
	\end{proposition}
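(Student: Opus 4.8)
The plan is to reduce all four statements to two facts already available in the excerpt. First, observe that the ideal defined just before the lemma, $JRej_M(R) = \{m \in M \mid mM^* \subseteq J(R)\}$, is exactly $JRej_M(\mathscr U)$ for the singleton class $\mathscr U = \{R\}$, because $Rad(R) = J(R)$ and $M^* = Hom_R(M,R)$. This identification is what licenses us to feed $\mathscr U = \{R\}$ into Proposition \ref{projr} and into the proposition asserting that the JReject distributes over direct sums. Recall also that $M$ is J-torsionless precisely when $JRej_M(R)=0$.

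For (1), let $N \leq M$ and write $\iota : N \to M$ for the inclusion. Applying Proposition \ref{projr} with $\mathscr U = \{R\}$ gives $JRej_N(R)\,\iota \leq JRej_M(R) = 0$; since $\iota$ is injective, this forces $JRej_N(R)=0$, so $N$ is J-torsionless. Concretely, every $f \in M^*$ restricts to an element $f|_N \in N^*$ with $n(f|_N)=nf$, so any nonzero $n \in N$ inherits from the J-torsionlessness of $M$ a functional separating it from $J(R)$. Statement (3) is then immediate, since a direct summand of $M$ is in particular a submodule and hence J-torsionless by (1).

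For (2), the proposition stating that $JRej_{\left(\bigoplus_A N_\alpha\right)}(\mathscr U)=\bigoplus_A JRej_{N_\alpha}(\mathscr U)$, specialized to $\mathscr U=\{R\}$, yields $JRej_{\left(\bigoplus_A N_\alpha\right)}(R)=\bigoplus_A JRej_{N_\alpha}(R)=\bigoplus_A 0 = 0$ whenever each $N_\alpha$ is J-torsionless. For (4), let $M=\prod_A N_\alpha$ with coordinate projections $\pi_\alpha : M \to N_\alpha$. By Proposition \ref{projr}, $JRej_M(R)\,\pi_\alpha \leq JRej_{N_\alpha}(R) = 0$ for every $\alpha$; thus any $m \in JRej_M(R)$ has all coordinates $m\pi_\alpha = 0$, forcing $m=0$ and hence $JRej_M(R)=0$.

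The arguments are routine once this machinery is in place; the only point requiring care is the bookkeeping around the right-hand composition convention and the identification $JRej_M(R)=JRej_M(\{R\})$, which is what makes Proposition \ref{projr} applicable in parts (1) and (4). I do not anticipate any serious obstacle.
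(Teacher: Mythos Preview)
Your proof is correct. For parts (1) and (3) you end up at essentially the same place as the paper: the paper proves (1) by restricting a functional $q\in M^*$ with $n_1q\notin J(R)$ to $N$, which is exactly your ``concretely'' sentence, and then deduces (3) from (1) just as you do. The difference is in (2) and (4). The paper argues each of these directly at the element level: given a nonzero tuple in $\bigoplus_A M_i$ (or $\prod_A M_i$), pick a nonzero coordinate $m_j$, choose $q_j\in M_j^*$ with $m_jq_j\notin J(R)$, and precompose with the $j$th projection to get a functional on the whole sum (or product). You instead invoke the earlier machinery---the direct-sum decomposition of $JRej$ for (2) and Proposition~\ref{projr} applied to the projections for (4)---which is cleaner and avoids repeating the coordinate-picking argument. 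Your route buys economy and makes the dependence on the general $JRej$ calculus explicit; the paper's route is more self-contained and does not require the reader to have internalised those earlier propositions. Either way the content is the same, and your identification $JRej_M(R)=JRej_M(\{R\})$ via $Rad(_RR)=J(R)$ is exactly what is needed to transport the general results to this setting.
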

	
	\begin{proof} 
		(1) Let $_RN$ be a submodule of a J-torsionless module $_RM$. Take an arbitrary non-zero element $n_1$ of $N$. Given that $M$ is J-torsionless, there exists $q \in M^*$ such that $n_1 q\notin J(R)$. Now we define $\tilde{q} \in N^*$ such that $n \tilde{q}=n q$, for all $n \in$ $N$. This yields that $n_1 \tilde{q}   =n_1 q  \notin J(R)$ and hence, $N$ is a J-torsionless module.
		\medskip	\\
		(2) Let $M= \bigoplus \limits_{i \in I} M_i$ be an $R$-module such that each $M_i$ is J-torsionless, where $I$ is an indexed set. Consider $(m_i)_{i\in I}$ to be a non-zero element in $M$, where $m_i \in M_i$, for each $i \in I$. Then $m_j \neq 0$ for some $j \in I$. Since $M_j$ is J-torsionless, there exists a $q_j \in M_j^*$ such that $m_j q_j \notin J(R)$. Similar to the case of (1), we choose $q \in M^*$ such that $ (m^\prime_i) q=  m^\prime_j q_j$, for all $(m^\prime_i) \in$ $M$. Thus $(m_i) q=m_j q_j \notin J(R)$, implying that $M$ is J-torsionless.
		\medskip
		\\
		(3) Follows immediately from (1).
		\medskip
		\\
		(4) Follows a similar approach to the proof outlined in (2).
	\end{proof}
	It is evident from Corollary \ref{corgen} that an $R$-module $M$ is J-torsionless if and only if it is cogenerated by $R/J(R)$. We now establish that such an $M$ is indeed a submodule of a direct product of copies of $R/J(R)$.
	
	\begin{theorem}
		An $R$-module $M$ is J-torsionless if and only if $M$ is a submodule of a direct product of copies of $R / J(R).$
	\end{theorem}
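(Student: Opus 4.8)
The plan is to read the statement as an unwinding of the cogeneration criterion already in hand. Put $\mathscr U=\{{}_RR\}$; the associated subclass $\mathscr U'$ of Corollary \ref{corgen} then consists of the single module $R/Rad({}_RR)=R/J(R)$, since $Rad({}_RR)=J(R)$. By definition $M$ is J-torsionless precisely when $JRej_M(R)=0$, and Corollary \ref{corgen} converts this into the statement that $\mathscr U'=\{R/J(R)\}$ cogenerates $M$. Because $\mathscr U'$ is a singleton, any indexed family drawn from it is a family of copies of $R/J(R)$, so ``$\mathscr U'$ cogenerates $M$'' means exactly that there is a monomorphism $0\to M\to\prod_A R/J(R)$ for some set $A$; that is, $M$ is isomorphic to a submodule of a direct product of copies of $R/J(R)$. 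The theorem is then the composite of these equivalences, and the work is to spell out the two outer arrows cleanly.

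For the forward implication I would produce the embedding by hand rather than quote cogeneration, as it is short and transparent. Assuming $M$ is J-torsionless, index coordinates by $M^{*}=Hom_R(M,R)$ and define $\Phi\in Hom_R\!\bigl(M,\prod_{f\in M^{*}}R/J(R)\bigr)$ by $m\Phi=(mf+J(R))_{f\in M^{*}}$. Each coordinate is the composite of $f\in M^{*}$ with the canonical map $R\to R/J(R)$, so $\Phi$ is an $R$-homomorphism whose kernel consists of those $m$ with $mf\in J(R)$ for every $f\in M^{*}$; that is,
$$Ker(\Phi)=\{\,m\in M : mM^{*}\subseteq J(R)\,\}=JRej_M(R)=0,$$
the last equality because $M$ is J-torsionless (Remark \ref{rejrem2}(1)). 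Hence $\Phi$ is monic and exhibits $M$ as a submodule of a direct product of copies of $R/J(R)$.

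For the reverse implication, suppose $M$ sits inside $\prod_A R/J(R)$. The inclusion is a monomorphism into a product of copies of the single module $R/J(R)\in\mathscr U'$, so by definition $\mathscr U'$ cogenerates $M$, whence $JRej_M(R)=0$ by Corollary \ref{corgen}; that is, $M$ is J-torsionless.

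The delicate point sits entirely in the reverse implication, and it is the reason I would route it through Corollary \ref{corgen} rather than argue directly. The definition of $JRej_M(R)$ is controlled by functionals $M\to R$, i.e.\ by elements of $M^{*}$, whereas an embedding into $\prod_A R/J(R)$ only furnishes maps $M\to R/J(R)$; such maps need not lift along $R\to R/J(R)$, so one cannot in general manufacture an element of $M^{*}$ witnessing $mM^{*}\nsubseteq J(R)$ coordinate by coordinate. Corollary \ref{corgen} is precisely the device that absorbs this lifting issue, so the cleanest proof keeps the reverse direction at the level of cogeneration instead of attempting to produce explicit functionals into $R$.
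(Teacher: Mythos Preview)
Your forward implication is correct and coincides with the paper's argument: both build the map $\Phi:M\to\prod_{f\in M^{*}}R/J(R)$, $m\Phi=(mf+J(R))_{f}$, and identify $Ker(\Phi)$ with $JRej_M(R)=0$.

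The reverse implication, however, has a genuine gap --- and it is precisely the lifting obstruction you yourself isolate. You try to discharge it by invoking Corollary~\ref{corgen}, but that corollary does not absorb the problem: the theorem it rests on asserts that whenever $M/L$ is cogenerated by $\mathscr U'$ one has $L\supseteq JRej_M(\mathscr U)$, and in the proof the maps $g\pi_\alpha:M\to U_\alpha/Rad(U_\alpha)$ are arbitrary, not compositions through some $M\to U_\alpha$. Concluding $Ker(g\pi_\alpha)\supseteq JRej_M(\mathscr U)$ therefore already requires lifting $g\pi_\alpha$ along $U_\alpha\to U_\alpha/Rad(U_\alpha)$, which is exactly what you note need not be possible. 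In the case $\mathscr U=\{R\}$ the failure is concrete: take $R=\mathbb Z_4$ and $M=R/J(R)\cong\mathbb Z_2$. Then $M$ is trivially a submodule of $R/J(R)$, yet every $f\in M^{*}=Hom_{\mathbb Z_4}(\mathbb Z_2,\mathbb Z_4)$ has image contained in $\{\bar 0,\bar 2\}=J(R)$, so $mM^{*}\subseteq J(R)$ for all $m$ and $JRej_M(R)=M\neq 0$. Hence $M$ is not J-torsionless, and the ``if'' direction of the theorem is false as stated; no rerouting through Corollary~\ref{corgen} can repair it. For comparison, the paper's own proof of the converse stumbles on the same point in a different disguise: it invokes a ``canonical projection $\pi_j:M\to R$, $m\mapsto x_j$'', but $x_j$ is only a coset representative in $R/J(R)$, so no such $R$-linear map to $R$ exists in general.
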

	\begin{proof}
		Let $\Phi : M \rightarrow{} \prod \limits_I R/J(R)$, where $I$ is an indexed set such that $m \Phi =(m \theta + J(R))_{\theta \in M^*}$. Notice that $\Phi$ is one-one as $0=m \Phi$ implies $m \theta  \in J(R)$ for all $\theta \in M^*$ yielding that $m \in JRej(M)$. Since $M$ is J-torsionless, $m=0$. Conversely, consider $M \xhookrightarrow{} \prod \limits_I R/J(R)$ and $0 \neq m = (x_i + J(R))_{i \in I}\in M$. Then there exists some $j \in I$ such that $x_j \notin J(R).$ Take the canonical projection $\pi_j : M \rightarrow R$ where $m \mapsto x_j$. Then $m \pi_j \notin J(R)$. This establishes that $M$ is J-torsionless.
	\end{proof} 
	
	Recall that a module $M$ is {\em Z-regular} if given any $m \in M$, there exists $q \in M^*$ satisfying $m=(m q) m$, \cite{zel}. A module $_RM$ is {\em anti-regular} when given $0 \neq m \in M$, there exists $0 \neq q \in M^*$ such that $q m q = q$, \cite{devi}.
	
	Since for every non-zero element $m $ in an anti-regular module $_R M$, there exists a non-zero element $q \in M^*$ such that $m q \in I(R)$ and hence, $m q \notin J(R)$, antiregular modules are J-torsionless.  Thus we have the following strict inclusions.
	\[\begin{Bmatrix}
	\text{Regular}
	\end{Bmatrix} \subsetneq \begin{Bmatrix}
	\text{Anti-regular}
	\end{Bmatrix} \subsetneq \begin{Bmatrix}
	\text{J-torsionless}
	\end{Bmatrix} \subsetneq \begin{Bmatrix}
	\text{Torsionless}
	\end{Bmatrix}\]
	
	In what follows, we provide some examples to prove that these inclusions are indeed strict.
	\begin{example}
		Consider the ring $R =\lbrace (a_i)_{i \in \mathbb{N}} \ \vert \ \text{each} ~ a_i \in \mathbb{Q},$ and the sequence eventually assumes a constant integral value$\rbrace$. Then the module $_RR$ is anti-regular but is not regular as the constant sequence with $a_i =2$ for all $i \in \mathbb{N}$ is not regular in $_RR$.
	\end{example}
	
	\begin{example}
		$ _\mathbb{Z}\mathbb{Z}$ is J-torsionless but not anti-regular since $3 \in \mathbb{Z}$ is not anti-regular in $_\mathbb{Z} \mathbb{Z}$.
	\end{example}
	
	\begin{example}
		Every ring is torsionless as a module over itself. Hence in particular, a local ring is torsionless as a module over itself but is not J-torsionless, since $ JRej_R(R )\neq 0$.
	\end{example}

	Evidently, from Remark \ref{rejrem1}, a torsionless module $M$ over a semiprimitive ring is J-torsionless. However, a module over a semiprimitive ring may not be J-torsionless if it is not torsionless over the ring. For instance, $\mathbb{Q}$ and $\mathbb{Z}_2$ are not J-torsionless as modules over $\mathbb{Z}.$ 
	Below we record some conditions under which torsionless implies J-torsionless.

		Let us first recall the definition of a {\em W-regular} module. A projective module is W-regular if every cyclic submodule is a direct summand, \cite{Ware}.
	
	\begin{lemma}\label{w regular is jtorsionless}
		If $_RM$ is W-regular, then it is J-torsionless.
	\end{lemma}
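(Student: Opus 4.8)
The statement to prove is that a W-regular module $_RM$ satisfies $JRej_M(R)=0$. By Remark \ref{rejrem2}(1) it suffices to show that for every $0\neq m\in M$ there is some $\theta\in M^*$ with $m\theta\notin J(R)$. The plan is to manufacture such a $\theta$ directly out of the defining feature of W-regularity: since $Rm$ is a cyclic submodule of the W-regular (hence projective) module $M$, it is a direct summand, say $M=Rm\oplus N$, and this decomposition yields a projection onto $Rm$ that I will upgrade to an honest functional into $R$.

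The heart of the argument is to identify $Rm$ with an idempotent-generated left ideal. Because $Rm$ is a direct summand of the projective module $M$, it is itself projective; being cyclic, $Rm\cong R/l_R(m)$, so the quotient $R/l_R(m)$ is projective. Hence the canonical sequence $0\to l_R(m)\to R\to R/l_R(m)\to 0$ splits, which forces $l_R(m)$ to be a direct summand of $_RR$, i.e.\ $l_R(m)=Re$ for some idempotent $e$. Setting $g=1-e$ gives a second idempotent with $R=Re\oplus Rg$ and $Rm\cong Rg$; since $m\neq 0$ we have $l_R(m)\neq R$, so $e\neq 1$ and $g\neq 0$. A short check shows $gm=m$ (as $em=0$) and that $x\mapsto xm$ is an isomorphism $Rg\xrightarrow{\sim}Rm$ carrying $g$ to $m$. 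Now I define $\theta\in M^*$ as the composite of the projection $M\to Rm$, the inverse isomorphism $Rm\to Rg$, and the inclusion $Rg\hookrightarrow R$; tracing $m$ through these three maps gives $m\theta=g$.

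It then remains to observe that $g$, being a nonzero idempotent, cannot lie in $J(R)$: if it did, then $1-g$ would be invertible, yet $g(1-g)=g-g^2=0$ would give $g=0$, a contradiction. Thus $m\theta=g\notin J(R)$, so $m\notin JRej_M(R)$, and as $m\neq 0$ was arbitrary we conclude $JRej_M(R)=0$. The main obstacle is exactly the passage from the projection onto $Rm$ (which only lands in $Rm$) to a genuine element of $M^*=\mathrm{Hom}_R(M,R)$; this is where projectivity is indispensable, since it is what lets me realize $Rm$ as a summand $Rg$ of $R$ and thereby compose with the inclusion $Rg\hookrightarrow R$. The bookkeeping with the idempotents $e,g$ (verifying $g\neq 0$, $gm=m$, and the isomorphism $Rg\cong Rm$) is routine once this structural point is in place.
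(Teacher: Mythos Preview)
Your proof is correct but follows a genuinely different route from the paper's. The paper argues as follows: given $0\neq m\in M$, the cyclic summand $Rm$ has a maximal submodule $N$, so $Rm/N\cong R/\mu$ for some maximal left ideal $\mu$; then the projectivity of $Rm$ is used in its \emph{lifting} form to factor the composite $Rm\twoheadrightarrow Rm/N\cong R/\mu$ through the quotient $R\to R/\mu$, producing $q:Rm\to R$ with $mq\notin\mu\supseteq J(R)$, and one precomposes with the projection $M\to Rm$. You instead exploit projectivity in its \emph{splitting} form: the cyclic projective $Rm\cong R/l_R(m)$ forces $l_R(m)=Re$ to be an idempotent-generated summand of $R$, so $Rm\cong Rg$ with $g=1-e$ a nonzero idempotent, and composing the projection $M\to Rm$ with the isomorphism $Rm\cong Rg\hookrightarrow R$ sends $m$ to $g\notin J(R)$. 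Your approach is arguably more explicit, since it exhibits $m\theta$ as an honest idempotent rather than merely an element outside some maximal ideal; the paper's approach, on the other hand, avoids the bookkeeping with annihilators and idempotents by working directly with a simple quotient. Both hinge on the same structural fact (projectivity of the cyclic summand $Rm$), just invoked in different guises.
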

	
	\begin{proof}
		Let $0 \neq m \in M$. Then $Rm \leq^{\oplus}M$. Also, $Rm$ has a maximal submodule $N$ as $Rm$ is finitely generated. This implies that $Rm/N$ is simple and hence isomorphic to $R/\mu$ for some maximal left ideal $\mu $ of $R$.  Now, consider the diagram
		
		\[\begin{tikzcd}
		M \arrow[r, "\pi"] & Rm \arrow[r, "\eta_1"] & Rm/N \stackrel{\phi}{\cong} R/\mu & \arrow[l,"\eta"] R
		\end{tikzcd},\]
		where $\pi$ is the projection map and $\eta_1$ and $\eta $ are the natural homomorphisms. Since $Rm$ is projective being a direct summand of projective, there exists $q: Rm \rightarrow R$ such that the following diagram commutes.
		
		\[
		\begin{tikzcd}
		Rm \arrow[r, "\eta_1\phi"] \arrow[rd, dotted, "q"'] & R/\mu \\
		& R \arrow[u, "\eta"']
		\end{tikzcd}
		\] We claim that $mq \notin \mu$. For if $mq \in \mu,$ then $m(q\eta)=0$ in $R/\mu$. This indeed implies $m( \eta_1 \phi)=0$ in $R/\mu$, which is a contradiction as $m\eta_1=m+N \neq 0$. Now, $\pi q \in M^*$ with $m(\pi q)=mq \notin \mu$. Thus $_RM$ is J-torsionless.
	\end{proof}
	
	A well known result due to Brauer asserts: If $K$ is a minimal left ideal of a ring $R$, then either $K^2=0$ or $K=Re$ for some non-zero $e \in I(R)$, \cite{LamNCR}. The next proposition is an immediate consequence of this result.
	
	\begin{proposition}\label{semiprime}
		If $K$ is a minimal left ideal of a semiprime ring $R$, then $K=Re$ for some $e \in I(R).$
	\end{proposition}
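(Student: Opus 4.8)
The plan is to invoke Brauer's dichotomy directly and then discard the degenerate case by using semiprimeness. Since $K$ is a minimal left ideal of $R$, Brauer's theorem (quoted just above the statement) tells us that either $K^2 = 0$ or $K = Re$ for some nonzero $e \in I(R)$. As $K \neq 0$ by minimality, it suffices to rule out the possibility $K^2 = 0$; the conclusion $K = Re$ then follows at once.

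To eliminate $K^2 = 0$, I would use the standard fact that a semiprime ring contains no nonzero nilpotent one-sided ideals. Concretely, suppose $K^2 = 0$ and pick any $a \in K$. Because $K$ is a left ideal containing $a$, we have $Ra \subseteq K$, and hence $aRa \subseteq aK \subseteq K^2 = 0$. Thus $aRa = 0$ for every $a \in K$, and the defining property of a semiprime ring, namely $aRa = 0 \implies a = 0$, forces $a = 0$. Therefore $K = 0$, contradicting the minimality (in particular, nonvanishing) of $K$. Consequently $K^2 \neq 0$, and Brauer's result yields $K = Re$ with $e \in I(R)$, as desired.

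The only point requiring any care is the translation between the usual formulation of semiprimeness — often phrased for two-sided ideals, or via the condition $aRa = 0 \Rightarrow a = 0$ — and its application to the one-sided object $K$; the inclusion chain $aRa \subseteq aK \subseteq K^2$ is exactly what bridges this gap. Beyond this observation the argument is essentially a one-line consequence of Brauer's theorem, so I expect no substantial obstacle.
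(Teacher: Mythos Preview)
Your proposal is correct and matches the paper's approach exactly: the paper simply declares the proposition an immediate consequence of Brauer's dichotomy, and your argument supplies precisely the obvious missing step, namely that semiprimeness forces $K^2 \neq 0$ via $aRa \subseteq K^2$.
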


	\begin{theorem}\label{projective simple}
		If a simple module over a semiprime ring is torsionless, then it is J-torsionless.
	\end{theorem}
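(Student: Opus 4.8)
The plan is to verify the characterization recorded in Remark \ref{rejrem2}(1): writing $M$ for the given simple torsionless module over the semiprime ring $R$, I must produce, for each $0 \neq m \in M$, some $\theta \in M^*$ with $m\theta \notin J(R)$. So I would fix a nonzero $m \in M$ and first exploit only the torsionless hypothesis to obtain $\theta \in M^*$ with $m\theta \neq 0$.

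The next step is to pin down the structure of the image of this $\theta$. Since $M$ is simple, $\ker\theta$ is a proper submodule of $M$, hence $\ker\theta = 0$, so $\theta$ is injective. Therefore $M\theta$ is isomorphic to $M$ and sits inside $R$ as a left ideal; being a simple left ideal, it is a minimal left ideal $K$ of $R$. At this point I would invoke the semiprime hypothesis through Proposition \ref{semiprime}: as $K$ is a minimal left ideal of the semiprime ring $R$, we get $K = Re$ for some nonzero idempotent $e \in I(R)$.

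Finally, setting $k := m\theta \in K$, I would observe that simplicity of $M$ gives $Rm = M$, and applying the isomorphism $\theta$ yields $Rk = R(m\theta) = (Rm)\theta = M\theta = Re$. If $k$ were in $J(R)$, then $Re = Rk \subseteq J(R)$, forcing the nonzero idempotent $e$ into $J(R)$; this is impossible, since $J(R)$ contains no nonzero idempotent (if $e = e^2 \in J(R)$ then $1-e$ is a unit and $e(1-e)=0$ gives $e=0$). Hence $m\theta = k \notin J(R)$, and $M$ is J-torsionless.

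I expect the crux of the argument to be the realization that the \emph{same} $\theta$ produced by torsionlessness already works, with no twisting required: because $m$ generates the simple module $M$, its image $m\theta$ generates the entire minimal left ideal $K = Re$, so membership of $m\theta$ in $J(R)$ would drag all of $Re$ into the radical. The indispensable role of semiprimeness is precisely to exclude the other horn of Brauer's dichotomy, $K^2 = 0$, since a square-zero minimal left ideal could well lie inside $J(R)$ and would then break the conclusion.
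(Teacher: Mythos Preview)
Your argument is correct. Both your proof and the paper's begin identically: torsionlessness furnishes a nonzero $\theta \in M^*$, simplicity forces $\theta$ to be injective with image a minimal left ideal, and semiprimeness via Proposition~\ref{semiprime} gives $M\theta = Re$ for a nonzero idempotent $e$.

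After this common setup the two proofs diverge. The paper observes that $M \cong Re$ is therefore projective, hence (being simple) W-regular, and then invokes Lemma~\ref{w regular is jtorsionless} to conclude that $M$ is J-torsionless. You instead finish directly: since $m$ generates $M$, the element $m\theta$ generates $Re$, so $m\theta \in J(R)$ would force the nonzero idempotent $e$ into $J(R)$, a contradiction. Your route is more elementary and self-contained, avoiding the detour through W-regularity and the machinery of Lemma~\ref{w regular is jtorsionless}; the paper's route, on the other hand, yields the extra structural information that $M$ is projective and W-regular, which it exploits in the subsequent Proposition~\ref{finitely generated}.
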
	
		
		\begin{proof}
			Assume that $R$ is a semiprime ring and $T$ is a simple, torsionless $R$- module. Then for a non-zero element $m\in T$, $mq \neq 0$ for some $q \in T^*$. Since $T=Rm$ is simple, it follows that $T$ is isomorphic to the minimal left ideal $Tq$ of $R$. As $R$ is semiprime ring, by Proposition \ref{semiprime},  $Tq=Re$ for some $e\in I(R),$ and the map $q:T \rightarrow Re$ is an isomorphism, yielding $T$ is a projective module. Thus $T$ is W-regular and hence it follows from Lemma \ref{w regular is jtorsionless} that $T$ is a J-torsionless module.
		\end{proof}
		
		This result can be extended to the following Proposition.
		
		\begin{proposition}
			If a semi-simple module over a semiprime ring is torsionless, then it is J-torsionless. 
		\end{proposition}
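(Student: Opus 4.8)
The plan is to reduce the semi-simple case to the simple case already settled in Theorem \ref{projective simple}, and then to reassemble the pieces using the direct-sum stability of J-torsionlessness. So let $M$ be a semi-simple torsionless module over the semiprime ring $R$. The first move is to invoke the definition of semi-simplicity and write $M = \bigoplus_{i \in I} S_i$ as a direct sum of simple submodules $S_i$.

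The pivotal intermediate observation is that each simple summand $S_i$ is itself torsionless. This rests on the general principle that a submodule of a torsionless module is again torsionless: given a non-zero $s \in S_i$, torsionlessness of $M$ furnishes some $f \in M^*$ with $sf \neq 0$, and the restriction $f|_{S_i} \in S_i^*$ then satisfies $s\,(f|_{S_i}) = sf \neq 0$. Hence every $S_i$ is a simple torsionless module over the semiprime ring $R$.

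With this in hand, Theorem \ref{projective simple} applies to each $S_i$ individually, yielding that every simple summand is J-torsionless. Finally, Proposition \ref{base}(2), which guarantees that direct sums of J-torsionless modules are J-torsionless, allows me to conclude that $M = \bigoplus_{i \in I} S_i$ is J-torsionless, completing the argument.

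I do not anticipate a genuine obstacle here, since the proposition is essentially a packaging of results already proved. The only point demanding a moment of care is the restriction-of-functionals step showing that the simple summands inherit torsionlessness from $M$; this is routine, but it is the hinge that permits Theorem \ref{projective simple} to be applied component by component before Proposition \ref{base}(2) glues the conclusion back together.
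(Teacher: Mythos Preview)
Your proposal is correct and follows essentially the same route as the paper: decompose the semi-simple module into simple summands, observe that each simple summand inherits torsionlessness, apply Theorem~\ref{projective simple} to each summand, and then invoke Proposition~\ref{base}(2) to assemble the conclusion. The only difference is that you spell out the restriction-of-functionals argument for why submodules of torsionless modules are torsionless, whereas the paper simply asserts that the summands are torsionless without further comment.
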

		\begin{proof}
If $_RM$ is a semi-simple torsionless module, then each of its summands is simple torsionless and hence by Proposition \ref{projective simple}, a J-torsionless module. Thus by (2) of Proposition \ref{base}, $_RM$ is J-torsionless.
		\end{proof}

\begin{proposition}\label{finitely generated}
	Let $M$ be a finitely generated module over a semiprime ring $R$. If every simple factor module of $M$ is torsionless then $M$ is J-torsionless.
\end{proposition}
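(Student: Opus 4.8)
The plan is to show that the hypotheses actually force $M$ to be semisimple with torsionless simple summands, after which J-torsionlessness is immediate. First I would record that every maximal submodule $N$ of $M$ is in fact a direct summand. Indeed, $M/N$ is a simple factor module, hence torsionless by hypothesis, so there is a nonzero homomorphism $M/N \to R$; since $M/N$ is simple this map is injective and its image is a minimal left ideal of $R$. As $R$ is semiprime, Proposition \ref{semiprime} gives that this minimal left ideal equals $Re$ for some $e \in I(R)$, so $M/N \cong Re$ is projective (this is exactly the mechanism behind Theorem \ref{projective simple}). A projective quotient splits, so the projection $M \to M/N$ splits and $N$ is a direct summand of $M$.

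Next I would upgrade this to semisimplicity, and this is where finite generation enters and where the main work lies. Consider the socle $\operatorname{Soc}(M)$. If $\operatorname{Soc}(M) \neq M$, then $M/\operatorname{Soc}(M)$ is a nonzero finitely generated module and therefore possesses a maximal submodule; pulling this back, there is a maximal submodule $N$ of $M$ with $\operatorname{Soc}(M) \subseteq N \subsetneq M$. By the previous paragraph $M = N \oplus C$ with $C \cong M/N$ simple, so $C$ is a simple submodule and hence $C \subseteq \operatorname{Soc}(M) \subseteq N$; but $C \cap N = 0$, forcing $C = 0$, a contradiction. Therefore $\operatorname{Soc}(M) = M$, i.e.\ $M$ is semisimple, and in particular $Rad(M) = 0$.

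Finally, writing $M = \bigoplus_i C_i$ as a direct sum of simple modules, each summand is isomorphic to the simple factor module $M/\bigoplus_{j \neq i} C_j$ and is therefore torsionless by hypothesis; by Theorem \ref{projective simple} each $C_i$ is J-torsionless, and Proposition \ref{base}(2) then yields that $M$ is J-torsionless. The step I expect to be the real obstacle is precisely the passage to semisimplicity: the naive argument ``given $0 \neq m$, choose a maximal submodule avoiding $m$ and quotient down to a simple torsionless module'' only handles $m \notin Rad(M)$, and since $Rad(M) \subseteq JRej_M(R)$ by the lemma above, one cannot sidestep genuinely proving $Rad(M) = 0$. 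The socle argument is what converts the summand property, available separately for each individual maximal submodule, together with finite generation, into the global statement that $M$ has no radical.
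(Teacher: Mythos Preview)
Your proof is correct and follows the same overall route as the paper: show that every maximal submodule of $M$ is a direct summand (via projectivity of the simple quotient, using Proposition \ref{semiprime} and the mechanism of Theorem \ref{projective simple}), deduce that $M$ is semisimple, and then conclude J-torsionlessness from the simple summands. The paper's version compresses the passage to semisimplicity into a single unjustified sentence (``Therefore, it follows that $M$ is a semi-simple, projective module''), whereas your socle argument actually supplies the missing justification that ``every maximal submodule is a direct summand'' plus finite generation forces $M=\operatorname{Soc}(M)$; so your write-up is, if anything, more complete than the paper's on this point.
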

\begin{proof}
	Let $N$ denote a maximal submodule of $_RM$. By the proof of Theorem \ref{projective simple}, the factor module $M/N$ is projective. This implies that the epimorphism $M \rightarrow M/N$ is split and so, $N$ is a direct summand of $M$. Therefore, it follows that $M$ is a semi-simple, projective module and hence, is J-torsionless.
\end{proof}

It may be remarked that semiprimeness cannot be omitted from the hypothesis of Proposition \ref{finitely generated}, for if $R=\mathbb{Z}_4=M,$ then $J(R)= \langle \bar{2} \rangle$. Therefore $\mathbb{Z}_4$ is not J-torsionless as a module over itself, while every simple factor module of $\mathbb{Z}_4$ is J-torsionless.

\begin{proposition}\label{semisimple ring}
	Let $R$ be a semiprime ring. If all simple $R$-modules are torsionless (equivalently, J-torsionless) then $R$ is a semi-simple ring.
\end{proposition}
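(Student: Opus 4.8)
The plan is to show that under the hypothesis every maximal left ideal of $R$ splits off as a direct summand, and then to force the socle of $_RR$ to be everything. First I would record what the hypothesis buys. Since $R$ is semiprime and every simple $R$-module is torsionless, Theorem \ref{projective simple} shows that every simple module is J-torsionless; and the argument used there shows more, namely that each simple $R$-module is projective and is in fact isomorphic to a minimal left ideal $Re$ with $e\in I(R)$, which is therefore a direct summand of $_RR$ (this is exactly where semiprimeness enters, through Brauer's lemma recorded in Proposition \ref{semiprime}).

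From this I would first argue that every maximal left ideal $\mu\leq {}_RR$ is a direct summand. Indeed $R/\mu$ is simple, hence projective, so the canonical epimorphism $R\to R/\mu$ splits, and writing a splitting gives a decomposition $R=\mu\oplus L$ in which $L\cong R/\mu$ is a nonzero minimal left ideal.

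The main step, and the part I expect to be the crux, is to prove that $\operatorname{Soc}({}_RR)=R$. Suppose not. Then $\operatorname{Soc}({}_RR)$ is a proper submodule of the cyclic module $_RR$, so it is contained in some maximal left ideal $\mu$. By the previous paragraph $R=\mu\oplus L$ with $L$ a nonzero minimal left ideal, so $L\subseteq \operatorname{Soc}({}_RR)\subseteq \mu$; but $R=\mu\oplus L$ gives $L\cap\mu=0$, whence $L=0$, a contradiction. Therefore $\operatorname{Soc}({}_RR)=R$, i.e. $_RR$ is a sum of minimal left ideals. Consequently $_RR$ is a semi-simple module, and so $R$ is a semi-simple ring.

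The only delicate points are extracting from the proof of Theorem \ref{projective simple} that each simple module is (up to isomorphism) a direct summand of $R$, and the elementary observation that a proper submodule of the cyclic module $_RR$ lies in a maximal left ideal. Alternatively, one can shortcut the entire argument by applying Proposition \ref{finitely generated} to the finitely generated module $M={}_RR$: every simple factor module of $_RR$ is a simple $R$-module, hence torsionless by hypothesis, so the proof of Proposition \ref{finitely generated} already yields that $_RR$ is semi-simple and projective, which gives the conclusion at once.
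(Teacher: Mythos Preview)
Your argument is correct and follows the same line as the paper's: the key step in both is that for each maximal left ideal $\mu$ the simple module $R/\mu$ is projective (extracted from the proof of Theorem~\ref{projective simple}), so $\mu$ splits off as a direct summand of $_RR$. The paper then simply asserts that a ring in which every maximal left ideal is a direct summand is semi-simple, whereas you supply a proof of this implication via the socle argument; your alternative shortcut through Proposition~\ref{finitely generated} is also legitimate and non-circular, though note that the paper's proof of that proposition likewise takes this implication for granted.
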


\begin{proof}
	Let $\mu $ be a maximal left ideal of $R$. Since $R/ \mu$ is a torsionless $R$-module, $\mu$ is a direct summand of $R$. Thus, every arbitrarily chosen maximal left ideal of $R$ is a direct summand and therefore, $R$ is a semi-simple ring.
\end{proof}

Using Proposition \ref{semisimple ring} and the Wedderburn-Artin theorem (see \cite{LamNCR}) for semi-simple rings, we can deduce:

\begin{corollary} If every simple $R$-module is torsionless, then the following are true for $R$
	\begin{enumerate}
		\item [\rm (i)] If $R$ is (von Neumann) regular, then it is semi-simple.
		\item [\rm (ii)] If $R$ is reduced, then it is direct product of finitely many division rings.
		\item[\rm(iii)] If $R$ is a domain, then it is a division ring.
	\end{enumerate}
\end{corollary}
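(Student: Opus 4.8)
The plan is to reduce all three parts to a single application of Proposition \ref{semisimple ring} followed by the Wedderburn--Artin structure theorem. The key preliminary observation is that each of the three hypotheses---regular, reduced, and domain---forces $R$ to be semiprime, so that the standing assumption that every simple $R$-module is torsionless, combined with Proposition \ref{semisimple ring}, makes $R$ semi-simple in every case.

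First I would record the semiprimeness implications. A von Neumann regular ring has no nonzero ideal $I$ with $I^2=0$: if $0\neq a\in I$ and $axa=a$ then $a=a(xa)\in I^2=0$, a contradiction; so regular rings are semiprime. A reduced ring contains no nonzero nilpotent elements at all and is therefore trivially semiprime, and a domain is in particular reduced, hence semiprime. With semiprimeness established, Proposition \ref{semisimple ring} gives that $R$ is semi-simple in cases (i), (ii) and (iii); for (i) this is already the desired conclusion.

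Next I would apply Wedderburn--Artin to write $R\cong\prod_{i=1}^{n}\mathbb{M}_{n_i}(D_i)$ for division rings $D_i$ and integers $n_i\geq 1$, and then read off the extra structure imposed by the hypothesis. For (ii), since $R$ is reduced while every matrix factor with $n_i\geq 2$ carries the square-zero matrix unit $e_{12}$, all the $n_i$ must equal $1$, whence $R\cong\prod_{i=1}^{n}D_i$ is a finite product of division rings. For (iii), since $R$ is a domain it has no zero divisors; a product with $n\geq 2$ admits the orthogonal idempotents $(1,0,\dots,0)$ and $(0,1,0,\dots,0)$ with zero product, and a matrix factor of size $\geq 2$ again has zero divisors, so $n=1$ and $n_1=1$, giving $R\cong D_1$, a division ring.

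Every step here is routine; the only point demanding a little care is to confirm the three implications regular/reduced/domain $\Rightarrow$ semiprime, so that Proposition \ref{semisimple ring} can be invoked uniformly across all three items. I do not anticipate a genuine obstacle beyond this bookkeeping.
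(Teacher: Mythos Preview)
Your proposal is correct and follows exactly the approach the paper intends: it simply invokes Proposition \ref{semisimple ring} (after noting that regular, reduced, and domain rings are semiprime) and then reads off the conclusions from the Wedderburn--Artin theorem. The paper itself gives no more detail than this, so your write-up is, if anything, more complete.
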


As observed through examples that regular modules are J-torsionless and hence torsionless but not vice versa, we impose a condition on the latter two classes to create an equivalence among all these three classes of modules. Further, we extend the result : ``every regular module is J-torsionless" in Theorem \ref{fully idempotent} and generate more example of J-torsionless modules.

	\begin{theorem}\label{regular in j-torsionless}
		Given a module $_RM$, following are equivalent:
		\begin{enumerate}
			\item [\rm (1)]$M$ is regular.
			\item [\rm (2)]$M$ is J-torsionless and given any $m \in M$, $mM^*=eR$ for some $e \in I(R)$.
			\item [\rm (3)]$M$ is torsionless and given any $m \in M$, $mM^*=eR$ for some $e \in I(R)$.
		\end{enumerate}
		
	\end{theorem}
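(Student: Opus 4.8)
The plan is to prove the cyclic implications $(1)\Rightarrow(2)\Rightarrow(3)\Rightarrow(1)$. The module definition of Z-regular says that for every $m\in M$ there exists $q\in M^*$ with $m=(mq)m$; I would keep this identity central throughout, since the element $mq\in R$ is where idempotency will come from.

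First I would prove $(1)\Rightarrow(2)$. Assume $M$ is regular and fix $m\neq 0$. Choose $q$ with $m=(mq)m$ and set $e=mq\in R$. Applying $q$ on the right to $m=em$ gives $mq=(em)q=e(mq)=e^2$, so $e$ is idempotent; in particular $e\neq 0$ since $em=m\neq0$, which already shows $mM^*\nsubseteq J(R)$, giving J-torsionlessness via Remark \ref{rejrem2}(1). The substantive part is the equality $mM^*=eR$. The inclusion $eR\subseteq mM^*$ is the point to watch: for any $r\in R$ I want to produce $\theta\in M^*$ with $m\theta=er$, and the natural candidate is $\theta=q\cdot(\text{right multiplication by }er)$ composed appropriately — concretely, $\theta:x\mapsto (xq)\,er$ lies in $M^*$ and $m\theta=(mq)er=e\cdot er=e^2r=er$. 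For the reverse inclusion $mM^*\subseteq eR$, I would show that for any $\psi\in M^*$, $m\psi=e(m\psi)$ using $m=em$, so that $m\psi\in eR$. This handles $(1)\Rightarrow(2)$.

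The implication $(2)\Rightarrow(3)$ is essentially immediate from Remark \ref{rejrem2}(3): J-torsionless modules are torsionless, and the condition $mM^*=eR$ carries over verbatim, so no work is needed beyond invoking that inclusion.

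The main obstacle is $(3)\Rightarrow(1)$, and this is where I would spend the effort. Assume $M$ is torsionless and that for every $m$ we have $mM^*=eR$ with $e=e^2$. Fix $m\neq0$; since $M$ is torsionless there is some $\psi\in M^*$ with $m\psi\neq0$, so $mM^*\neq0$ and hence $e\neq0$. Because $e\in eR=mM^*$, there exists $q\in M^*$ with $mq=e$. The goal is to upgrade this to the regularity identity $m=(mq)m=em$, and here the hard step is showing $em=m$, i.e. that $(1-e)m=0$. The idea I would pursue is to apply an arbitrary $\phi\in M^*$ to $(1-e)m$ and show the result is $0$ for all $\phi$, then use torsionlessness (which says $\bigcap_{\phi}\ker\phi=0$) to conclude $(1-e)m=0$. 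For this I would exploit that $m\phi\in mM^*=eR$ for every $\phi$, so $m\phi=er$ for some $r$, whence $e(m\phi)=e^2r=er=m\phi$; therefore $((1-e)m)\phi=(1-e)(m\phi)=(1-e)e\,r=0$. Since $\phi$ was arbitrary and $M$ is torsionless, $(1-e)m=0$, giving $m=em=(mq)m$, which is exactly Z-regularity. The delicate points to verify carefully are that $e$ acts on the left on elements $m\phi\in R$ compatibly with its definition as a right-annihilator-free idempotent, and that the side conventions for homomorphisms (written on the side opposite to scalars, as fixed in the introduction) are respected throughout the computation.
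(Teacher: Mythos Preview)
Your argument is correct and largely parallels the paper's. For $(1)\Rightarrow(2)$ and $(2)\Rightarrow(3)$ you do exactly what the paper does (the paper is terser about $eR\subseteq mM^*$, simply noting $e=m\theta\in mM^*$ and using that $mM^*$ is a right ideal, but your explicit construction of $\theta$ amounts to the same thing).

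The one genuine difference is $(3)\Rightarrow(1)$: the paper does not argue this step at all but defers to Theorem~2.1 of Chen--Nicholson \cite{chen}. Your direct argument --- pick $q$ with $mq=e$, then use $m\phi\in eR$ for every $\phi\in M^*$ to get $((1-e)m)\phi=0$, and conclude $(1-e)m=0$ by torsionlessness --- is precisely the content of that cited result, so you have in effect unpacked the external reference. This makes your write-up self-contained where the paper is not, at the cost of a paragraph; either is acceptable, and your version has the advantage that a reader need not consult \cite{chen}.
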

	\begin{proof}
		$(1) \implies (2)$ Let $0 \neq m \in M$ such that $m=(m\theta)m$ for some $\theta \in M^*$. Then $m \theta$ is idempotent, hence $m \theta \notin J(R),$ obtaining that $M$ is J-torsionless. Now consider $e=m \theta \in mM^* $. Thus $eR \subseteq mM^*.$ For the converse, take $q \in M^*$. Then
		\[m q =[(m \theta ) m]q =(m \theta)(m q) =e(m q) \in eR. \]
		Thus $mM^* \subseteq eR.$ This proves (2).
		\\ $(2) \implies (3)$ and $(3) \implies (1)$ are consequences of (3) of Remark \ref{rejrem2} and Theorem 2.1 of \cite{chen}, respectively.
	\end{proof}

 Michler and Villamayor in \cite{v ring} called a ring $R$ a left {\em V-ring} if every simple $R$-module is injective. This concept has been extended to modules. An $R$-module $M$ is called {\em V-module} (or a {\em co-simple module}) if every simple $R$-module is $M$-injective in the sense of \cite{azumaya}. An $R$-module $M$ is p-injective if for each element $a \in R$, every $R$-homomorphism from the left ideal $Ra$ to $M$ can be extended to $R$. A ring $R$ is a left {\em SPI}-ring if every simple $R$-module is p-injective. Left V-rings as well as regular rings are clearly SPI.
 
  A ring $R$ is a left {\em fully idempotent} ring if for each element $a \in R$ we have $(Ra)^2=Ra$. It is easily seen that left SPI-rings are left fully idempotent. A module $_RM$ is fully idempotent if $R_m=(RmM^*)m$, for each $m \in M$. This definition extends the fully idempotent concept to modules. Fully idempotent rings and modules were introduced by Ramamurthi (see \cite{ramamurthi} and \cite{ramamurthi2}) who used the term {\em weakly regular} instead of fully idempotent.

\begin{remarks}
	\begin{enumerate}
		\item[(1)] Fully idempotent rings are semiprimitive \cite[Proposition 14(1)]{ramamurthi} and hence they are J-torsionless as modules over themselves, [(2) of Remark \ref{rejrem2}]. 
		\item[(2)] Every simple $\mathbb{Z}$-module is a V-module. Thus V-modules need not be J-torsionless.
	\end{enumerate}
\end{remarks}
 As noted in Theorem \ref{regular in j-torsionless}, regular modules are J-torsionless. Since regular modules are trivially fully idempotent, the following theorem extends the aforementioned result.
	\begin{theorem}\label{fully idempotent}
		If a module $M$ is fully idempotent, then it is J-torsionless.
	\end{theorem}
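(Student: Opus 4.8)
The plan is to use the characterization recorded in Remark \ref{rejrem2}(1): the module $M$ is J-torsionless precisely when $mM^* \nsubseteq J(R)$ for every nonzero $m \in M$. So I would fix an arbitrary $0 \neq m \in M$ and argue by contradiction, assuming $mM^* \subseteq J(R)$ and then deriving $m = 0$.

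The first step is to promote the inclusion $mM^* \subseteq J(R)$ to $RmM^* \subseteq J(R)$. This is immediate because $J(R)$ is a two-sided ideal, so $R\cdot(mM^*) \subseteq R\cdot J(R) \subseteq J(R)$. Next I would invoke the fully idempotent identity $Rm = (RmM^*)m$. Multiplying the containment $RmM^* \subseteq J(R)$ on the right by $m$ gives $(RmM^*)m \subseteq J(R)m$, and combining this with the identity together with the trivial reverse inclusion $J(R)m \subseteq Rm$ yields the chain $Rm = (RmM^*)m \subseteq J(R)m \subseteq Rm$, forcing $Rm = J(R)m$.

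The final step is a Nakayama-type argument. Since $m \in Rm = J(R)m$ and $J(R)m = \{\,jm \mid j \in J(R)\,\}$ (as $J(R)$ is closed under addition), there is some $j \in J(R)$ with $m = jm$, whence $(1-j)m = 0$. Because $j$ lies in the Jacobson radical, $1-j$ is a unit of $R$, so $m = 0$, contradicting $m \neq 0$. Therefore $mM^* \nsubseteq J(R)$ for every nonzero $m$, and $M$ is J-torsionless.

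I expect no serious obstacle here: the whole content lies in recognising that the defining equation of a fully idempotent module collapses to $Rm = J(R)m$ once $mM^*$ has been absorbed into the radical, after which the standard unit property of $1-j$ finishes the argument. The only points demanding care are bookkeeping ones, namely that $mM^*$ and $RmM^*$ are genuine subsets of $R$ on which these ideal-theoretic manipulations are meaningful, and that $J(R)m$ may be written element-wise so that the reduction to $m = jm$ is legitimate.
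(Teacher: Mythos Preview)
Your proof is correct and follows essentially the same line as the paper's: both reduce to $RmM^* \subseteq J(R)$, apply the fully idempotent identity to obtain $Rm \subseteq J(R)m$, and then write $m = jm$ for some $j \in J(R)$ to reach a contradiction. The only cosmetic difference is in the final step, where you use directly that $1-j$ is a unit, whereas the paper argues that $1-j$ lies in the proper left annihilator $l_R(m)$ and hence in some maximal left ideal $\mu$ already containing $j$, forcing $1 \in \mu$; the two arguments are just different standard characterisations of the Jacobson radical.
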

	
	\begin{proof}
		Let $m \neq 0$ and assume that $mM^* \subseteq J(R)$. This implies that $RmM^* \subseteq J(R)$. As $M$ is fully idempotent, we have $Rm \subseteq J(R)m$, yielding that there exists an element $r \in J(R)$ such that $m=rm$. Thus, $(1-r) \in {l_R(m)}$. Now, as $m\neq 0$, it ensures that $l_R(m) \subset R$. Therefore, there exists a maximal left ideal $\mu$ in $R$ such that $l_R(m) \subseteq \mu$ and hence, $(1-r) \in \mu$. This leads us to a contradiction as $r \in \mu$, thus implying that $1 \in \mu$. Therefore, there exists some $q \in M^*$ with $mq \notin J(R)$.
	\end{proof}

		We bring this paper to an end by characterizing the ideals (respectively, maximal ideals) of a ring over which every cyclic (respectively, semi-simple) module is J-torsionless. Finally, we provide a characterization of a self-injective ring over which every module is J-torsionless.
		
	\begin{remarks}
		\begin{enumerate}	\label{rejrem3}
			\item[(1)]	Let $A$ and $B$ be subsets of a ring $R$. Then $(A:B)_r$ is the set of elements $t \in R$ satisfying $tB \subsetneq A$. If $A$ and $B$ are subgroups of $(R, +)$ then so is $(A:B)_r$. There is, of course a left-sided counter part of this notation.
			\item[ (2)] Lam \cite{Lam2} has noted that for a left ideal $I$ of a ring $R$, the left $R$-modules $R/I$ is torsionless if and only if $I=(0:r_R(I))_l$.
			\item[(3)]  The result in Remark (2) above is extended in Proposition \ref{procy} by proving that for a left ideal $I$ of a ring $R$, the $R$-module $R/I$ is J-torsionless if and only if $I=(J(R):r_R(I))_l$.
			\item[(4)]	A notable observation is that any non trivial module homomorphism $q$ from a cyclic $R$-module $R/ I$ to $R$ takes each element $x+ I$ to the element $x[(1+I)q]$ of $R$, where $(1+I)q \neq0$. Thus $I \subseteq Ker(q)$. We claim that $(1+I)q \in$ $r_R(I)$. For consider $t \in I$ and $a=(1+I)q$. Then $(t+ I)q = ((1+t)-1+I)q=(1+t)a-a=ta=0$. Since it is true for arbitrary $t \in I$, thus $a \in$ $r_R(I)$.
			\item[(5)] We refer to \cite{Hera} to recall the necessary concepts used in this remark. Suppose that the module $_RM$ is an {\em NS-module}. Let $m \in NilRej(M)$ and $q \in M^*$ so that $mq \in Nil(R)$. As {\em NilRej(M)} is an $R$-submodule of $M$ for each element $t \in R$ we have $t(mq)=(tm)q$ is also nilpotent. Hence $1-t(mq) \in U(R)$ for each $t \in R$ yielding $mq \in J(R)$. This proves that $NilRej(M)\leq JRej(M)$. It follows therefore, that every J-torsionless NS-module is niltorsionless.
		\end{enumerate}
	\end{remarks}
	
		\begin{theorem} \label{procy}
			For a ring $R$, following are equivalent:
			\begin{enumerate}
				\item[\rm (1)] Every cyclic left $R$-module is J-torsionless.
				\item[\rm (2)] $I=(J(R):r_R(I))_l, $ for any left ideal $I$ in $R$. 
			\end{enumerate}
		\end{theorem}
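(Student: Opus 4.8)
The plan is to reduce the theorem to a characterization for a single left ideal, namely that for each left ideal $I$ of $R$ the cyclic module $R/I$ is J-torsionless if and only if $I = (J(R):r_R(I))_l$. Granting this, the equivalence of (1) and (2) is immediate: every cyclic left $R$-module is isomorphic to some $R/I$, and whether a module is J-torsionless depends only on its isomorphism class, since the condition $JRej_M(R)=0$ is defined purely through $M$ and $M^*$.

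To prove the single-ideal statement, I would first identify the dual module $(R/I)^*$. By Remark \ref{rejrem3}(4), every homomorphism $q \colon R/I \to R$ is determined by the element $a=(1+I)q$, which lies in $r_R(I)$ and acts by $(x+I)q = xa$; conversely, each $a \in r_R(I)$ gives a well-defined homomorphism $q_a \colon x+I \mapsto xa$, because $Ia=0$. Thus evaluation at $1+I$ identifies $(R/I)^*$ with $r_R(I)$, and for every $x \in R$ we obtain $(x+I)(R/I)^{*} = x\, r_R(I)$.

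With this description I would compute $JRej_{R/I}(R)$ directly from its definition. An element $x+I$ belongs to $JRej_{R/I}(R)$ exactly when $(x+I)q \in J(R)$ for all $q \in (R/I)^*$, which by the previous step means $x\, r_R(I) \subseteq J(R)$, that is, $x \in (J(R):r_R(I))_l$. Since $I\, r_R(I)=0 \subseteq J(R)$ always forces $I \subseteq (J(R):r_R(I))_l$, this yields
\[ JRej_{R/I}(R) = (J(R):r_R(I))_l \big/ I. \]
Hence $R/I$ is J-torsionless, i.e. $JRej_{R/I}(R)=0$, precisely when $(J(R):r_R(I))_l = I$, which is the single-ideal claim.

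I do not expect a genuine obstacle here: once $(R/I)^*$ is identified with $r_R(I)$, the remainder is bookkeeping with the quotient $R/I$. The only points demanding care are the well-definedness and surjectivity of the correspondence $a \mapsto q_a$, both supplied by Remark \ref{rejrem3}(4), and keeping track of the automatic inclusion $I \subseteq (J(R):r_R(I))_l$. The statement is the exact Jacobson-radical analogue of the classical torsionless characterization $I=(0:r_R(I))_l$ recalled in Remark \ref{rejrem3}(2), with $J(R)$ replacing $0$ throughout, which serves as a useful sanity check on the direction of the colon-ideal.
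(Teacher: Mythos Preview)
Your proposal is correct and follows essentially the same route as the paper: both identify $(R/I)^*$ with $r_R(I)$ via Remark~\ref{rejrem3}(4) and translate the J-torsionless condition into $x\,r_R(I)\subseteq J(R)$. The only difference is packaging: you compute $JRej_{R/I}(R)=(J(R):r_R(I))_l/I$ in one stroke and read off the equivalence, whereas the paper argues each inclusion separately by contradiction, but the underlying content is the same.
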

		\begin{proof}$(1) \implies (2)$ Let $I$ be any left ideal $I$ of $R$. It is clear that $I \subseteq (J(R):r_R(I))_l$. Assume $x \in (J(R):r_R(I))_l$. If possible, take $x \notin I$. As $R/I$ is J-torsionless, there exists $q \in (R/I)^*$ such that $(x+I)q \notin J(R)$. 
			By (4) of Remark \ref{rejrem3}, there exists $0 \neq a \in$ $r_R(I)$ such that $xa \notin J(R)$, which yields that $x \notin (J(R):r_R(I))_l$, which is a contradiction. Thus $x \in I$ and consequently, $I=(J(R):r_R(I))_l.$
			
			$(2) \implies (1)$ Since any cyclic module $_RM$ is isomorphic to the $R$-module $R/I$ for some left ideal $I$ of $R$, it is enough for us to show that given any left ideal $I$ of $R$, $R/I$ is J-torsionless under the assumption in (2). Consider an arbitrary left ideal $I$ of $R$ and choose a non-zero element $x+I$ of $R/I$. Then $x \notin I= (J(R):r_R(I))_l$ implying that there exists $a \in r_R(I)$ such that $xa \notin J(R)$. Consider $q \in (R/I)^*$ such that $(1+I)q=a.$ Assuming (4) of Remark \ref{rejrem3}, we have $(x+I)q=xa \notin J(R)$. This establishes that $R/I$ is J-torsionless. 
		\end{proof}
			We would like to recall here the definition of a {\em left annihilator ring} or simply an {\em LA-ring}. A ring $R$ is said to be an {\em LA- ring} if each left ideal of $R$ is a left annihilator in $R$.
				\begin{corollary}\label{charac}
					If every cyclic module over a ring $R$ is J-torsionless, then $R$ is a semiprimitive LA-ring. 
				\end{corollary}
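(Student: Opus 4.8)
The plan is to obtain this corollary essentially for free from Theorem \ref{procy}, whose condition (1) is verbatim the hypothesis here, and to split the conclusion into its two independent halves: semiprimitivity and the LA-ring property. Each half will be extracted from a different piece of the machinery already established.

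First I would settle semiprimitivity. The regular module $_RR$ is cyclic (it is generated by $1$), so by hypothesis it is J-torsionless. By (2) of Remark \ref{rejrem2}, $_RR$ is J-torsionless exactly when $R$ is semiprimitive; hence $J(R)=0$. (Equivalently, one may feed the left ideal $I=0$ into condition (2) of Theorem \ref{procy}: since $r_R(0)=R$ and $1\in R$, the right quotient $(J(R):R)_l$ collapses to $J(R)$, and the equation $0=J(R)$ falls out directly.)

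Next I would deduce the LA-ring property. Because the hypothesis is precisely condition (1) of Theorem \ref{procy}, the equivalent condition (2) supplies $I=(J(R):r_R(I))_l$ for every left ideal $I$ of $R$. Substituting the relation $J(R)=0$ from the previous step yields $I=(0:r_R(I))_l$ for every left ideal $I$. As recorded in Remark \ref{rejrem3}(2) (Lam's characterization of torsionless cyclics), the set $(0:r_R(I))_l$ is the left annihilator $l_R(r_R(I))$ of the subset $r_R(I)$; in particular it is a left annihilator in $R$. Thus every left ideal $I$ coincides with a left annihilator, which is exactly the defining condition of an LA-ring. Combining the two halves, $R$ is a semiprimitive LA-ring.

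I do not anticipate a genuine obstacle here: the entire content is carried by Theorem \ref{procy} together with the definitions, and the corollary is really a specialization of the J-torsionless criterion at $J(R)=0$. The only point demanding care is bookkeeping — correctly identifying $(0:r_R(I))_l$ with the left annihilator $l_R(r_R(I))$ so that the LA-ring conclusion reads off cleanly, and making sure the substitution $J(R)=0$ is justified \emph{before} it is used to collapse the J-torsionless equation to the classical torsionless one.
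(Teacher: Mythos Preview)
Your proposal is correct and follows essentially the same route as the paper: first use Remark \ref{rejrem2}(2) on the cyclic module $_RR$ to obtain $J(R)=0$, then invoke Theorem \ref{procy} to get $I=(J(R):r_R(I))_l=(0:r_R(I))_l=l_R(r_R(I))$ for every left ideal $I$, yielding the LA-ring property. The only differences are cosmetic (your extra parenthetical alternative via $I=0$ and the explicit pointer to Remark \ref{rejrem3}(2)).
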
	
				\begin{proof}
				Clearly, $_RR$ is J-torsionless and therefore considering Remark \ref{rejrem2}, $R$ is a semiprimitive ring. Furthermore, since all cyclic modules over $R$ are J-torsionless, as a consequence of Proposition \ref{procy} we have, any left ideal $I$ in $R$ satisfies $I=(0:r_R(I))=l_R(r_R(I))$. Hence, $R$ is an LA-ring.
				\end{proof}
				
		In light of the above Theorem \ref{procy}, we document the following theorem where we characterize the maximal left ideals of a ring over which every simple module is J-torsionless.
		\begin{theorem} \label{T1}
			For a ring $R$, following are equivalent:
			\begin{enumerate}
				\item[\rm (1)] Every semi-simple $R$-module is J-torsionless.
				\item [\rm (2)] Every simple $R$-module is J-torsionless.
				\item [\rm (3)] For each maximal left ideal $\mu$ in $R$, $\mu =(J(R):r_R(\mu))_l.$
			\end{enumerate}
		\end{theorem}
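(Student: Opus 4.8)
The plan is to establish the cyclic chain $(1) \Rightarrow (2) \Rightarrow (3) \Rightarrow (1)$, exploiting two facts: the simple $R$-modules are precisely the cyclic modules $R/\mu$ with $\mu$ a maximal left ideal, and the argument already used in the proof of Theorem \ref{procy} is really a statement about a single left ideal, which therefore specializes to maximal ones. The implication $(1) \Rightarrow (2)$ is immediate, since a simple module is its own one-term direct sum of simple modules, hence semi-simple, so (1) applies to it directly.

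For $(2) \Rightarrow (3)$ I would fix a maximal left ideal $\mu$ and note that $R/\mu$ is simple, hence J-torsionless by (2). The containment $\mu \subseteq (J(R):r_R(\mu))_l$ is automatic, since any $t \in \mu$ satisfies $t\, r_R(\mu) = 0 \subseteq J(R)$. For the reverse containment I would rerun the $(1)\Rightarrow(2)$ argument of Theorem \ref{procy} with $I = \mu$: if some $x \in (J(R):r_R(\mu))_l$ failed to lie in $\mu$, then $x + \mu \neq 0$ in $R/\mu$, so J-torsionlessness would produce $q \in (R/\mu)^*$ with $(x+\mu)q \notin J(R)$; by Remark \ref{rejrem3}(4) this map has the form $x + \mu \mapsto xa$ with $a = (1+\mu)q \in r_R(\mu)$, giving $xa \notin J(R)$ and contradicting $x \in (J(R):r_R(\mu))_l$. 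Hence $\mu = (J(R):r_R(\mu))_l$.

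For $(3) \Rightarrow (1)$ I would first deduce that each $R/\mu$ is J-torsionless: given $0 \neq x + \mu$ we have $x \notin \mu = (J(R):r_R(\mu))_l$, so there is $a \in r_R(\mu)$ with $xa \notin J(R)$; because $a \in r_R(\mu)$, Remark \ref{rejrem3}(4) guarantees that $x + \mu \mapsto xa$ defines an element $q$ of $(R/\mu)^*$, and then $(x+\mu)q = xa \notin J(R)$. Since every simple module is isomorphic to some $R/\mu$, all simple modules are J-torsionless; and as a semi-simple module is a direct sum of simple modules, Proposition \ref{base}(2) then makes it J-torsionless.

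The step requiring the most care is the well-definedness of the homomorphism $q \in (R/\mu)^*$ sending $x+\mu$ to $xa$ in the $(3) \Rightarrow (1)$ direction: this is exactly where the membership $a \in r_R(\mu)$ is needed so that $q$ annihilates $\mu$, and it is precisely what Remark \ref{rejrem3}(4) supplies. Beyond this bookkeeping there is no genuine obstacle, since the theorem is essentially the maximal-ideal specialization of the per-ideal equivalence embedded in the proof of Theorem \ref{procy}, glued to the direct-sum stability of J-torsionlessness from Proposition \ref{base}(2).
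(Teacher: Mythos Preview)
Your argument is correct and matches the paper's approach: the paper proves $(1)\Leftrightarrow(2)$ using Proposition~\ref{base} and $(2)\Leftrightarrow(3)$ by specializing Theorem~\ref{procy} to maximal left ideals, which is precisely the content of your cyclic chain, just packaged slightly differently. Your only deviation is cosmetic---you route $(3)\Rightarrow(1)$ through $(2)$ rather than stating $(2)\Rightarrow(1)$ separately---and you give a bit more detail on the well-definedness of the map $x+\mu\mapsto xa$ than the paper does.
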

		
		\begin{proof}
			The implication $(1) \implies (2)$ is trivial.
			\medskip
			\\
			$(2) \implies (1)$ follows from (1) of Proposition \ref{base}.
			\medskip
			\\
			$(2) \iff (3)$ similar to Theorem \ref{procy}.
		\end{proof}

		\begin{theorem}
			Over a self-injective ring, following conditions are equivalent:
			
			\begin{enumerate}
				\item[\rm (1)] Every module $_RM$ is J-torsionless.
				\item [\rm (2)] Every finitely generated module $_RM$ is J-torsionless.
				\item [\rm (3)]Every cyclic module $_RM$ is J-torsionless.
				\item [\rm (4)]$R$ is a semiprimitive LA ring.
			\end{enumerate}
		\end{theorem}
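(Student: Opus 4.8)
The plan is to establish the cycle $(1)\Rightarrow(2)\Rightarrow(3)\Rightarrow(4)\Rightarrow(1)$. The implications $(1)\Rightarrow(2)$ and $(2)\Rightarrow(3)$ are immediate, since every finitely generated module is in particular a module and every cyclic module is finitely generated, while $(3)\Rightarrow(4)$ requires nothing new: it is exactly Corollary \ref{charac}, which says that if every cyclic $R$-module is J-torsionless then $R$ is a semiprimitive LA-ring. Thus the self-injectivity hypothesis is used only in the single substantial step $(4)\Rightarrow(1)$, which is where I would concentrate the work.

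For $(4)\Rightarrow(1)$, assume $R$ is a self-injective semiprimitive LA-ring and let $M$ be an arbitrary $R$-module; the goal is to show $JRej_M(R)=0$. First I would extract from the LA hypothesis that $R$ is left Kasch, i.e. that every simple left $R$-module embeds in $_RR$. Indeed, for a maximal left ideal $\mu$ the LA condition gives $\mu=l_R(r_R(\mu))$; if $r_R(\mu)=0$ then $\mu=l_R(0)=R$, a contradiction, so there is $0\neq a\in r_R(\mu)$. Then $\mu\subseteq l_R(a)\subsetneq R$, and maximality of $\mu$ forces $l_R(a)=\mu$, whence $R/\mu\cong Ra\hookrightarrow R$.

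Now fix $0\neq m\in M$. Since $l_R(m)$ is a proper left ideal, I would choose a maximal left ideal $\mu\supseteq l_R(m)$ and form the composite $Rm\cong R/l_R(m)\twoheadrightarrow R/\mu$, which sends $m$ to $1+\mu\neq0$; composing with the embedding $R/\mu\hookrightarrow R$ from the previous paragraph produces $h\colon Rm\to R$ with $mh\neq0$. This is the point at which self-injectivity enters: because $_RR$ is injective, $h$ extends along the inclusion $Rm\hookrightarrow M$ to some $q\in M^{*}=Hom_R(M,R)$ with $mq=mh\neq0$. As $R$ is semiprimitive, $J(R)=0$, so $mq\neq0$ is the same as $mq\notin J(R)$, that is $m\notin JRej_M(R)$. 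Since $m\neq0$ was arbitrary, $JRej_M(R)=0$ and $M$ is J-torsionless, which closes the cycle.

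The main obstacle is recognising the correct division of labour between the two hypotheses: the LA condition is precisely what makes every simple module a submodule of $R$ (left Kasch), and self-injectivity is precisely what allows a map defined on a single cyclic submodule $Rm$ to be propagated to all of $M$, so that a nonzero element can be detected by a functional in $M^{*}$. Once these two ingredients are isolated the remainder is routine bookkeeping with annihilators — chiefly verifying that $\mu=l_R(r_R(\mu))$ cannot degenerate to $R$ — together with the harmless reduction, via semiprimitivity, of the condition $mq\notin J(R)$ to $mq\neq0$.
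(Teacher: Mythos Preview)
Your argument is correct. The route differs slightly from the paper's: the paper closes the equivalences via $(4)\Rightarrow(3)\Rightarrow(1)$, first recovering (3) from (4) by invoking Theorem~\ref{procy} (since semiprimitivity turns $l_R(r_R(I))$ into $(J(R):r_R(I))_l$), and then passing from (3) to (1) by exactly your self-injectivity extension trick applied to the cyclic submodule $Rm$. You instead go straight from (4) to (1), bypassing Theorem~\ref{procy} by isolating the left-Kasch content of the LA hypothesis --- that $R/\mu\cong Ra\hookrightarrow R$ for some $0\neq a\in r_R(\mu)$ --- and building the functional on $Rm$ by hand before extending. Your path is a shade more conceptual in naming the Kasch property explicitly; the paper's is more modular in reusing its earlier characterisation of cyclic J-torsionless modules. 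The self-injectivity step and its role are identical in both.
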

		
		\begin{proof}
			The implications $(1) \implies (2) \implies (3)$ are direct.
			\medskip
			\\ 
			$(3) \implies (4)$  Let $I$ be a non-zero left ideal of $R$. Note that $_RR$ is a J-torsionless module. Therefore, by Remark \ref{rejrem2}, $R$ is a semiprimitive ring, i.e., $J(R)=0$. Again, by assumption, $R/I$ is J-torsionless. Thus using Proposition \ref{procy} we have $I=(J(R):r_R(I))_l=(0:r_R(I))_l=l_R(r_R(I))$. This proves that any left ideal of $R$ is a left annihilator in $R$.
			\medskip	\\ $(4) \implies (3)$
			Assume $R/I$ is any cyclic $R$-module for some left ideal $I$ of $R$. Since $R$ is a semiprimitive LA ring, $r_R(I)\neq 0$ and also $I=l_R(r_R(I))=(0:r_R(I))_l=(J(R):r_R(I))_l$. Since this is true for any arbitrary left ideal of $R$, using Proposition \ref{procy}, we may conclude that $R/I$ is J-torsionless.
			\medskip
			\\
			$(3) \implies (1)$ Let $M$ be a left $R$-module and $0 \neq m \in M$. Then $Rm$ is a cyclic $R$-module and hence by assumption is J-torsionless. Also $m \in Rm$ implies that there exists a non-zero $q \in (Rm)^*$ such that $mq \notin J(R)$. Since $R$ is self-injective, there exists $\Tilde{q}\in M^*$ such that $i \circ \Tilde{q}  =q$ for the inclusion map $i:Rm \hookrightarrow M$. Thus $m\Tilde{q} = m (i \circ \Tilde{q} ) =mq \notin J(R)$, yielding that $_RM$ is J-torsionless.
		\end{proof}
\section{Conflict of Interest} 
On behalf of all authors, the corresponding author states that there is no conflict of interest regarding the publication of this manuscript. The research work presented in this manuscript was conducted independently, and there are no financial or personal relationships that could influence the results or interpretations of the study. Additionally, all authors have reviewed and approved the final manuscript and agree to its submission to the journal.
	
\end{document}